\documentclass[reqno]{amsart}
\usepackage{mathrsfs}
\usepackage{color}
\usepackage{amsmath}
\usepackage{amsfonts}
\usepackage{amssymb}
\usepackage{graphicx}


 \newtheorem{Theorem}{Theorem}[section]
 \newtheorem{Corollary}[Theorem]{Corollary}
 \newtheorem{Lemma}[Theorem]{Lemma}

 \newtheorem{Remark}[Theorem]{Remark}

 \numberwithin{equation}{section}



\begin{document}

\title{Modules at boundary points, fiberwise Bergman kernels, and log-subharmonicity}
\author{Shijie Bao}
\address{Shijie Bao: Institute of Mathematics, Academy of Mathematics
and Systems Science, Chinese Academy of Sciences, Beijing 100190, China.}
\email{bsjie@amss.ac.cn}

\author{Qi'an Guan}
\address{Qi'an Guan: School of
Mathematical Sciences, Peking University, Beijing 100871, China.}
\email{guanqian@math.pku.edu.cn}

\subjclass[2010]{32D15 32E10 32L10 32U05 32W05}

\thanks{}

\keywords{Bergman kernels, $L^2$ extension, strong openness property}

\date{\today}

\dedicatory{}

\commby{}

\maketitle
\begin{abstract}
In this article, we consider Bergman kernels with respect to modules at boundary points, and obtain a log-subharmonicity property of the Bergman kernels,
which deduces a concavity property related to the Bergman kernels.
As applications, we reprove the sharp effectiveness result related to a conjecture posed by Jonsson-Must\c{a}t\u{a} and the effectiveness result of strong openness property of the modules at boundary points. 
\end{abstract}

\section{Introduction}

The strong openness property of multiplier ideal sheaves (i.e. $\mathcal{I}(\psi)=\mathcal{I}_+(\psi):=\mathop{\cup} \limits_{\epsilon>0}\mathcal{I}((1+\epsilon)\psi)$) is an important feature
 and has been widely used in the study of several complex variables, complex algebraic geometry and complex differential geometry
(see e.g. \cite{GZSOC,K16,cao17,cdM17,FoW18,DEL18,ZZ2018,GZ20,berndtsson20,ZZ2019,ZhouZhu20siu's,FoW20,KS20,DEL21}),
where $\psi$ is a plurisubharmonic function on a complex manifold $M$ (see \cite{Demaillybook}) and multiplier ideal sheaf $\mathcal{I}(\psi)$ is the sheaf of germs of holomorphic functions $f$ such that $|f|^2e^{-\psi}$ is locally integrable (see e.g. \cite{Tian,Nadel,Siu96,DEL,DK01,DemaillySoc,DP03,Lazarsfeld,Siu05,Siu09,DemaillyAG,Guenancia}).

The strong openness property was conjectured by Demailly \cite{DemaillySoc} and proved by Guan-Zhou \cite{GZSOC} (the 2-dimensional case was proved by Jonsson-Musta\c{t}\u{a}
\cite{JM12}). Recall that in order to prove the strong openness property, Jonsson and Musta\c{t}\u{a} (see \cite{JM13}, see also \cite{JM12}) posed the following conjecture, and proved the 2-dimensional case \cite{JM12}:

\textbf{Conjecture J-M}: If $c_o^F(\psi)<+\infty$, $\frac{1}{r^2}\mu(\{c_o^F(\psi)\psi-\log|F|<\log r\})$ has a uniform positive lower bound independent of $r\in(0,1)$, 
where $\mu$ is the Lebesgue measure on $\mathbb{C}^n$, and $c_o^{F}(\psi):=\sup\{c\geq 0 : |F|^2e^{-2c\psi}$ is locally $L^1$ near $o \}$.

Using the strong openness property, Guan-Zhou \cite{GZeff} proved Conjecture J-M.

Independent of the strong openness property, Bao-Guan-Yuan \cite{BGY} considered minimal $L^{2}$ integrals with respect to a module at a boundary point of the sublevel sets, 
and established a concavity property of the minimal $L^{2}$ integrals, which deduced a sharp effectiveness result related to Conjecture J-M,
and completed the approach from Conjecture J-M to the strong openness property.

As a generalization of Berndtsson's log-plurisubharmonicity result of fiberwise Bergman kernels (see \cite{Blogsub}), in \cite{BG1} (see also \cite{BG2}), we obtained the log-plurisubharmonicity of fiberwise Bergman kernels with respect to functionals over the space of holomorphic germs by using the optimal $L^2$ extension theorem (see \cite{guan-zhou13ap}) and Guan-Zhou method (see \cite{Ohsawa}). As applications, we gave new approaches to the effectiveness results of strong openness property (\cite{BG1}) and $L^p$ strong openness property (\cite{BG2}).  

As continuity work of \cite{BG1} and \cite{BG2}, in this article, we consider Bergman kernels with respect to the modules at boundary points, and obtain a log-subharmonicity property of the Bergman kernels (as a generalization of Berndtsson's log-subharmonicity result of fiberwise Bergman kernels in \cite{Blogsub}), which deduces a new approach from Conjecture J-M to the strong openness property. We also give a reproof for the effectiveness result of the strong openness property related to the modules at boundary points.

\subsection{Main result}\label{main}
\

Let $D$ be a pseudoconvex domain in $\mathbb{C}^n$, and the origin $o\in D$. Let $F\not\equiv 0$ be a holomorphic function on $D$, and $\psi$ be a negative plurisubharmonic function on $D$. Let $\varphi_0$ be a plurisubharmonic function on $D$. Denote that
\[\Psi:=\min\{\psi-2\log|F|, 0\}.\]
If $F(w)=0$ for $w\in D$, set $\Psi(w)=0$.

We recall some notations in \cite{BGY}. Denote that
\[\tilde{J}(\Psi)_o:=\{f\in\mathcal{O}(\{\Psi<-t\}\cap V) : t\in\mathbb{R},\ V \text{\ is \ a \ neighborhood \ of \ } o\},\]
and
\[J(\Psi)_o:=\tilde{J}(\Psi)_o/\sim,\]
where the equivalence relation `$\sim$' is as follows:
\[f \sim g \ \Leftrightarrow \ f=g \text{\ on \ } \{\Psi<-t\}\cap V, \text{\ where\ }  t\gg 1, V \text{\ is\ a\ neighborhood\ of\ } o.\]
For any $f\in \tilde{J}(\Psi)_o$, denote the equivalence class of $f$ in $J(\Psi)_o$ by $f_o$. And for any $f_o,g_o\in J(\Psi)_o$, and $(h,o)\in\mathcal{O}_o$, define
\[f_o+g_o:=(f+g)_o,\ (h,o)\cdot f_o:=(hf)_o.\]
It is clear that $J(\Psi)_o$ is an $\mathcal{O}_{o}-$module. For any $a\geq 0$, denote that $I(a\Psi+\varphi_0)_o:=\big\{f_o\in J(\Psi)_o : \exists t\gg 1, V \text{\ is\ a\ neighborhood\ of\ } o,\ \text{s.t.\ } \int_{\{\Psi<-t\}\cap V}|f|^2e^{-a\Psi-\varphi_0}<+\infty\big\}.$ Then it is clear that $I(a\Psi+\varphi_0)_o$ is an $\mathcal{O}_{o}-$submodule of $J(\Psi)_o$. Especially, we denote that $I(\varphi_0)_o:=I(0\Psi+\varphi_0)_o$, $I(\Psi)_o:=I(\Psi+0)_o$, and $I_o:=I(0\Psi+0)_o$. Then $I(a\Psi+\varphi_0)_o$ is an $\mathcal{O}_{o}-$submodule of $I(\varphi_0)_o$ for any $a>0$.

 For any $t\in [0,+\infty)$ and $\lambda>0$, denote that
\[\Psi_{\lambda,t}:=\lambda\max\{\Psi+t,0\},\]
and for any $f\in A^2(\{\Psi<0\},e^{-\varphi_0})$ and $\lambda>0$, denote that
\[\|f\|_{\lambda,t}:=\left(\int_{\{\Psi<0\}}|f|^2e^{-\varphi_0-\Psi_{\lambda,t}}\right)^{1/2},\]
where $A^2(\{\Psi<0\},e^{-\varphi_0}):=\{f\in\mathcal{O}(\{\Psi<0\}) : \int_{\{\Psi<0\}}|f|^2e^{-\varphi_0}<+\infty\}$ (if $\varphi_0\equiv 0$, we may denote $A^2(\{\Psi<0\}):=A^2(\{\Psi<0\},e^0)$). It is clear that $e^{-\lambda t/2}\|f\|_{\lambda,0}\leq\|f\|_{\lambda,t}\leq\|f\|_{\lambda,0}<+\infty$ for any $t\geq 0$.

For any $\xi\in A^2(\{\Psi<0\},e^{-\varphi_0})^*$ (the dual space of $A^2(\{\Psi<0\},e^{-\varphi_0})$), denote that the Bergman kernel related to $\xi$ is
\[K^{\varphi_0}_{\xi,\Psi,\lambda}(t):=\sup_{f\in A^2(\{\Psi<0\},e^{-\varphi_0})}\frac{|\xi\cdot f|^2}{\|f\|_{\lambda,t}^2}\]
for any $t\in [0,+\infty)$.

Denote $E:=\{w\in\mathbb{C} :\text{Re\ } w\geq 0\}\subset\mathbb{C}$. 
We obtain the following log-subharmonicity property of the Bergman kernel $K^{\varphi_0}_{\xi,\Psi,\lambda}$.

\begin{Theorem}\label{concavity}
	Assume that $K^{\varphi_0}_{\xi,\Psi,\lambda}(0)\in (0,+\infty)$. Then $\log K^{\varphi_0}_{\xi,\Psi,\lambda}(\text{Re\ } w)$ is subharmonic with respect to $w\in E$.
\end{Theorem}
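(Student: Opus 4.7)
Since $K^{\varphi_{0}}_{\xi,\Psi,\lambda}(\text{Re}\,w)$ depends on $w$ only through $t=\text{Re}\,w$, proving that its logarithm is subharmonic on $E$ is equivalent to proving that $t\mapsto \log K^{\varphi_{0}}_{\xi,\Psi,\lambda}(t)$ is convex on $[0,+\infty)$. Passing to the dual formulation, set
\[M(t):=\inf\bigl\{\|f\|_{\lambda,t}^{2}:f\in A^{2}(\{\Psi<0\},e^{-\varphi_{0}}),\ \xi(f)=1\bigr\}=\frac{1}{K^{\varphi_{0}}_{\xi,\Psi,\lambda}(t)}.\]
The claim then becomes the concavity of $\log M(t)$ on $[0,+\infty)$: for $0\le t_{1}<t_{2}$, $\alpha\in(0,1)$, and $t_{0}:=\alpha t_{1}+(1-\alpha)t_{2}$,
\[\log M(t_{0})\ge \alpha\log M(t_{1})+(1-\alpha)\log M(t_{2}).\]

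The plan is to establish this concavity by following the strategy of our earlier papers \cite{BG1,BG2} and \cite{BGY}, which combines the optimal $L^{2}$ extension theorem of Guan-Zhou \cite{guan-zhou13ap} with the Guan-Zhou method. Starting from a near-minimizer $f_{0}$ of $M(t_{0})$ (which exists by Hilbert-space duality, since $K^{\varphi_{0}}_{\xi,\Psi,\lambda}(0)\in(0,+\infty)$ and the norms $\|\cdot\|_{\lambda,t}$ are all equivalent to $\|\cdot\|_{\lambda,0}$), one performs an optimal $L^{2}$ extension on the ambient pseudoconvex $D$ with the psh weight $\varphi_{0}+\Psi_{\lambda,t_{0}}$ to produce admissible test functions $f_{1},f_{2}$ satisfying $\xi(f_{i})=1$ and whose norms $\|f_{i}\|_{\lambda,t_{i}}^{2}$ are related to $\|f_{0}\|_{\lambda,t_{0}}^{2}$ via the exponents $\alpha$ and $1-\alpha$. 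The convexity of the piecewise-linear map $t\mapsto \max\{\Psi(z)+t,0\}$ is what yields these sharp exponents after taking infima over $f_{1}$ and $f_{2}$.

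The main obstacle is that $\Psi=\min\{\psi-2\log|F|,0\}$ is itself not plurisubharmonic, so a direct Berndtsson-type complex Brunn--Minkowski argument on $\{\Psi<0\}\times\mathbb{C}$ with the weight $\Phi(z,w):=\varphi_{0}(z)+\lambda\max\{\Psi(z)+\text{Re}\,w,0\}$ is unavailable. The technical heart of the proof is the Guan-Zhou iterated-extension device on $D$: one decomposes the $L^{2}$ norm along the level sets of $\Psi$ and runs the optimal $L^{2}$ extension level by level, tracking all constants exactly so that no multiplicative loss occurs and the sharp exponents $\alpha,1-\alpha$ survive. Relative to \cite{BG1,BG2}, the new feature is handling the boundary-module structure (encoded by $J(\Psi)_{o}$) simultaneously with a general continuous functional $\xi\in A^{2}(\{\Psi<0\},e^{-\varphi_{0}})^{*}$, which requires fusing the approach of \cite{BGY} with the fiberwise viewpoint of \cite{BG1,BG2}.
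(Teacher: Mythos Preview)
Your proposal rests on a false premise. You assert that $\Psi=\min\{\psi-2\log|F|,0\}$ is not plurisubharmonic and that therefore the direct Berndtsson-type argument on $\{\Psi<0\}\times\mathbb{C}$ with weight $\Phi(z,w)=\varphi_0(z)+\lambda\max\{\Psi(z)+\operatorname{Re}w,0\}$ is unavailable. This is exactly backwards. On $\{\Psi<0\}$ one necessarily has $F\neq 0$ (points with $F=0$ are assigned $\Psi=0$), so there $\Psi=\psi-2\log|F|$, a genuine plurisubharmonic function. Consequently $\{\Psi<0\}$ is the sublevel set of a psh function on the pseudoconvex domain $D\setminus\{F=0\}$, hence is itself pseudoconvex, and the weight $\Phi(z,w)$ \emph{is} plurisubharmonic on $\{\Psi<0\}\times E$. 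The ``main obstacle'' you identify does not exist.

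The paper's proof proceeds by the very route you dismissed: it works on $\Omega'=\{\Psi<0\}\times\Delta(w_0,r)$ with the psh weight $\tilde\Psi=\varphi_0+\lambda\max\{\Psi+\operatorname{Re}w,0\}$, applies the optimal $L^2$ extension theorem once (not iteratively) to extend a maximizer $f_0$ at the central fiber $w_0$ to a holomorphic $\tilde f$ on $\Omega'$, then combines Jensen's inequality with the fact that $w\mapsto\xi\cdot\tilde f_w$ is holomorphic (Lemma~\ref{xihol}) to get the sub-mean-value inequality for $\log K$. Upper semicontinuity is handled separately via Montel and Lemma~\ref{fjtof0}. Your alternative plan---an iterated level-by-level Guan--Zhou device on $D$---is not only unnecessary but is also left entirely schematic: you never specify how the extension produces competitors $f_1,f_2$ with $\xi(f_i)=1$ and the required norm bounds, nor how the ``sharp exponents $\alpha,1-\alpha$'' actually emerge. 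As written, the proposal is a description of difficulties rather than a proof.
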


Let $J$ be an $\mathcal{O}_{o}-$submodule of $I(\varphi_0)_o$. Denote that
\[A^2(\{\Psi<0\},e^{-\varphi_0})\cap J:=\{f\in A^2(\{\Psi<0\},e^{-\varphi_0}) : f_o\in J\}.\]
Assume that $A^2(\{\Psi<0\},e^{-\varphi_0})\cap J$ is a proper subspace of $A^2(\{\Psi<0\},e^{-\varphi_0})$ (and we will state that it is a closed subspace). 

Using Theorem \ref{concavity}, we obtain the following concavity and monotonicity property related to $K^{\varphi_0}_{\xi,\Psi,\lambda}$.

\begin{Theorem}\label{increasing}
	Assume that $J\supset I(\Psi+\varphi_0)_o$, and assume that $\xi\in A^2(\{\Psi<0\},e^{-\varphi_0})^*$ such that $\xi|_{A^2(\{\Psi<0\},e^{-\varphi_0})\cap J }\equiv 0$ and $K^{\varphi_0}_{\xi,\Psi,\lambda}(0)\in (0,+\infty)$. Then $-\log K^{\varphi_0}_{\xi,\Psi,\lambda}(t)+t$ is concave and increasing with respect to $t\in [0,+\infty)$.
\end{Theorem}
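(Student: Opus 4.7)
My plan is to deduce both assertions from Theorem~\ref{concavity} together with an optimal $L^{2}$-extension argument of Ohsawa--Takegoshi--Guan--Zhou type.

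First, the concavity is an immediate consequence of Theorem~\ref{concavity}: since the function $w \mapsto \log K^{\varphi_0}_{\xi,\Psi,\lambda}(\text{Re\ } w)$ is subharmonic on $E$ and depends only on $t=\text{Re\ } w$, the one-variable function $t \mapsto \log K^{\varphi_0}_{\xi,\Psi,\lambda}(t)$ is convex on $[0,+\infty)$. Adding the linear function $t$ to the concave $-\log K^{\varphi_0}_{\xi,\Psi,\lambda}(t)$ preserves concavity.

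For the monotonicity I plan to reduce to the uniform estimate
\[ K^{\varphi_0}_{\xi,\Psi,\lambda}(t) \le e^{t}\, K^{\varphi_0}_{\xi,\Psi,\lambda}(0) \qquad \text{for every } t \ge 0,\]
which is equivalent to $-\log K^{\varphi_0}_{\xi,\Psi,\lambda}(t) + t \ge -\log K^{\varphi_0}_{\xi,\Psi,\lambda}(0)$. A concave function on $[0,+\infty)$ that is bounded below is automatically non-decreasing, since any interval of strict decrease would force the right-derivative to remain strictly negative thereafter and drive the function to $-\infty$. Hence this bound, together with the first paragraph, will finish the proof.

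To prove the estimate, given any $f \in A^{2}(\{\Psi<0\},e^{-\varphi_{0}})$ I plan to apply the Guan--Zhou optimal $L^{2}$-extension theorem with the plurisubharmonic weight $\lambda\max\{\Psi,-t\}$ (which vanishes on $\{\Psi<-t\}$ and equals $\lambda(\Psi+t)$ on $\{-t \le \Psi < 0\}$) to produce $\tilde f \in A^{2}(\{\Psi<0\},e^{-\varphi_{0}})$ satisfying
\[ \|\tilde f\|_{\lambda,0}^{2} \le e^{t}\,\|f\|_{\lambda,t}^{2}, \qquad (\tilde f - f)_{o} \in I(\Psi+\varphi_{0})_{o}.\]
Because $J \supset I(\Psi+\varphi_{0})_{o}$, the difference $\tilde f - f$ lies in $A^{2}(\{\Psi<0\},e^{-\varphi_{0}}) \cap J$; since $\xi$ vanishes on this subspace we get $\xi\tilde f = \xi f$, and therefore
\[ |\xi f|^{2} = |\xi \tilde f|^{2} \le K^{\varphi_{0}}_{\xi,\Psi,\lambda}(0)\,\|\tilde f\|_{\lambda,0}^{2} \le e^{t}\,K^{\varphi_{0}}_{\xi,\Psi,\lambda}(0)\,\|f\|_{\lambda,t}^{2}.\]
Taking the supremum over $f$ yields the displayed bound.

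The main obstacle is securing the sharp constant $e^{t}$ in the extension step. The trivial pointwise estimate $\Psi_{\lambda,t} \le \lambda t$ yields only $\|\tilde f\|_{\lambda,0}^{2} \le e^{\lambda t}\|f\|_{\lambda,t}^{2}$, which is too weak when $\lambda > 1$; the improvement from $e^{\lambda t}$ to $e^{t}$ is precisely what the optimal $L^{2}$-extension of Ohsawa--Takegoshi--Guan--Zhou provides, and the hypothesis $J \supset I(\Psi+\varphi_{0})_{o}$ is what makes this gain usable, since it is what allows $f$ to be replaced by $\tilde f$ without altering the value of $\xi f$.
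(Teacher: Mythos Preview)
Your overall approach coincides with the paper's: convexity of $\log K^{\varphi_0}_{\xi,\Psi,\lambda}(t)$ from Theorem~\ref{concavity}, then an $L^2$-extension producing $\tilde f$ with $(\tilde f - f)_o \in I(\Psi+\varphi_0)_o \subset J$ and $\|\tilde f\|_{\lambda,0}^2$ controlled by $e^t\|f\|_{\lambda,t}^2$, so that $-\log K^{\varphi_0}_{\xi,\Psi,\lambda}(t)+t$ is bounded below and hence (being concave on $[0,+\infty)$) increasing.

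The one misstep is your identification of the ``main obstacle''. The sharp constant $e^t$ is \emph{not} needed: any bound $K^{\varphi_0}_{\xi,\Psi,\lambda}(t) \le C_1\, e^{t}\, K^{\varphi_0}_{\xi,\Psi,\lambda}(0)$ with a fixed $C_1>0$ still gives the finite lower bound $-\log K^{\varphi_0}_{\xi,\Psi,\lambda}(0)-\log C_1$ for $-\log K^{\varphi_0}_{\xi,\Psi,\lambda}(t)+t$, and that is all your own concavity argument requires. The paper accordingly does not invoke an optimal extension theorem at this step but the cruder Lemma~\ref{L2mthod}: applied to the extremal $f_t$ from Lemma~\ref{sup=max}, it yields $\tilde F_t := \tilde F/F^2$ on $\{\Psi<0\}$ with $\|\tilde F_t\|_{\lambda,0}^2 \le 2(Ce^{t+1}+1)\|f_t\|_{\lambda,t}^2$, and the weight $e^{v_t(\Psi)-\Psi}$ appearing in that lemma is precisely what forces $(\tilde F_t - f_t)_o \in I(\Psi+\varphi_0)_o$. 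Your appeal to the ``optimal'' theorem for the exact factor $e^t$ is therefore unnecessary and, as stated, also vague (which formulation, extended from which sublevel set, with which gain function?). Incidentally, the weight you write as $\lambda\max\{\Psi,-t\}$ should be $\lambda\max\{\Psi+t,0\}=\Psi_{\lambda,t}$ to match your own description of where it vanishes.
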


\begin{Remark}\label{notconstant}
	Let $\xi\in A^2(\{\Psi<0\},e^{-\varphi_0})^*$. According to Theorem \ref{concavity}, if there exist $k>0$ and $T>0$, such that $e^{-kt} K^{\varphi_0}_{\xi,\Psi,\lambda}(t)$ is increasing and not a constant function on $[0,T]$, then $e^{-kt}K^{\varphi_0}_{\xi,\Psi,\lambda}(t)$ is strictly increasing on $[T,+\infty)$.
\end{Remark}

\subsection{Applications}
\

As applications of Theorem \ref{concavity} and Theorem \ref{increasing}, we give new proofs of some results in \cite{BGY,GMY-BC2}.

Let $D$ be a pseudoconvex domain in $\mathbb{C}^n$, and the origin $o\in D$. Let $F\not\equiv 0$ be a holomorphic function on $D$, and $\psi$ be a negative plurisubharmonic function on $D$. Denote that
\[\Psi:=\min\{\psi-2\log|F|, 0\}.\]
If $F(w)=0$ for $w\in D$, set $\Psi(w)=0$.

Let $f$ be a holomorphic function on $D$. Recall the definition of the minimal $L^{2}$ integral related to $J$ (\cite{BGY,GMY-BC2})
\[G(t;\Psi,J,f):=\inf\left\{\int_{\{\Psi<-t\}}|\tilde{f}|^2 : \tilde{f}\in\mathcal{O}(\{\Psi<-t\}) \ \& \ (\tilde{f}-f)_o\in J\right\}\]
for any $\mathcal{O}_o-$submodule $J$ of $I_o$ and $t\in[0,+\infty)$. Denote that
\[\Psi_1:=\min\{2c_o^{fF}(\psi)\psi-2\log|F|,0\},\]
and
\[I_+(\Psi_1)_o:=\bigcup_{a>1}I(a\Psi_1)_o,\]
where $c_o^{fF}(\psi):=\sup\{c\geq 0 : |fF|^2e^{-2c\psi}$ is locally $L^1$ near $o \}$.

Theorem \ref{increasing} deduces a reproof of the following lower bound of $L^{2}$ integrals.
\begin{Corollary}[\cite{BGY}]\label{J-M+f}
	If $f\in A^2(\{\Psi_1<0\})$, and $c_o^{fF}(\psi)<+\infty$, then for any $r\in (0,1]$,
	\[\frac{1}{r^2}\int_{\{c_o^{fF}(\psi)\psi-\log|F|<\log r\}}|f|^2\geq G(0;\Psi_1,I_+(\Psi_1)_o,f)>0.\]
\end{Corollary}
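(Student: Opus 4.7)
The plan is to reduce the Corollary to the monotonicity $e^t G(t) \geq G(0) > 0$ for $G(t) := G(t;\Psi_1, I_+(\Psi_1)_o, f)$ and $t = -2\log r \geq 0$, then to derive both parts from Theorem \ref{increasing}. The reduction is direct: $\{c_o^{fF}(\psi)\psi - \log|F| < \log r\} = \{2c_o^{fF}(\psi)\psi - 2\log|F| < -t\} = \{\Psi_1 < -t\}$ (on that set the first argument of the $\min$ defining $\Psi_1$ is already $\leq -t \leq 0$), $1/r^2 = e^t$, and $\tilde f = f$ is always a valid competitor for $G(t)$. For the positivity $G(0) > 0$ it suffices to show $f_o \notin I_+(\Psi_1)_o$: membership would produce some $a>1$ with $\int_V |f|^2 |F|^{2a} e^{-2ac\psi} < +\infty$ near $o$ (writing $c := c_o^{fF}(\psi)$), and combining with the local integrability $\int_V |f|^2 e^{-2\delta\psi}<+\infty$ for some small $\delta > 0$ (a standard consequence of $f \in \mathcal{O}$ and the openness of the multiplier ideal at $o$) via H\"older's inequality with exponents $a$ and $a/(a-1)$ yields $\int_V |fF|^2 e^{-2c'\psi} < +\infty$ with $c' := c + \delta(a-1)/a > c$, contradicting the definition of $c_o^{fF}(\psi)$.

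For the monotonicity, a direct application of Theorem \ref{increasing} with $(\Psi,\varphi_0,J) = (\Psi_1, 0, I_+(\Psi_1)_o)$ would require $I_+(\Psi_1)_o \supset I(\Psi_1)_o$, which is the strong openness we are working toward. I would circumvent this by the perturbation $\Psi^\epsilon := (1+\epsilon)\Psi_1$, $\epsilon > 0$: then $I(\Psi^\epsilon)_o = I((1+\epsilon)\Psi_1)_o \subset I_+(\Psi_1)_o$ by definition of $I_+$, so the hypothesis is satisfied. Let $\tilde f^\ast \in A^2(\{\Psi_1 < 0\})$ be the orthogonal projection of $f$ onto $(A^2(\{\Psi_1<0\}) \cap I_+(\Psi_1)_o)^\perp$ (so $\|\tilde f^\ast\|^2 = G(0)$), and set $\xi(g) := \langle g, \tilde f^\ast\rangle_{L^2(\{\Psi_1<0\})}$. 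Then $\xi$ vanishes on $A^2(\{\Psi_1<0\}) \cap I_+(\Psi_1)_o$, $\xi(f) = G(0)$, and $K^{0}_{\xi,\Psi^\epsilon,\lambda}(0) = \|\xi\|^2 = G(0) \in (0, +\infty)$.

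For every $\tilde f \in A^2(\{\Psi_1<0\})$ with $(\tilde f - f)_o \in I_+(\Psi_1)_o$, orthogonality gives $\xi(\tilde f) = G(0)$, whence $K^{0}_{\xi,\Psi^\epsilon,\lambda}(s) \geq G(0)^2/\|\tilde f\|^2_{\lambda,s}$. Theorem \ref{increasing} supplies $K^{0}_{\xi,\Psi^\epsilon,\lambda}(s) \leq e^s K^{0}_{\xi,\Psi^\epsilon,\lambda}(0) = e^s G(0)$, so $\|\tilde f\|^2_{\lambda,s} \geq e^{-s} G(0)$. Passing $\lambda \to +\infty$ (so $\|\tilde f\|^2_{\lambda,s} \to \int_{\{\Psi^\epsilon<-s\}}|\tilde f|^2$) and invoking an Ohsawa--Takegoshi type $L^2$ extension to match $\inf_{\tilde f \in A^2(\{\Psi_1<0\})}\int_{\{\Psi^\epsilon<-s\}}|\tilde f|^2$ with $G(s;\Psi^\epsilon, I_+(\Psi_1)_o, f)$ (each competitor of the latter extends to an $A^2$ function on $\{\Psi_1<0\}$ with identical values on $\{\Psi^\epsilon<-s\}$) yields $G(s;\Psi^\epsilon,I_+(\Psi_1)_o, f) \geq e^{-s} G(0)$. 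Since $\{\Psi^\epsilon<-s\} = \{\Psi_1 < -s/(1+\epsilon)\}$, substituting $t = s/(1+\epsilon)$ produces $e^{(1+\epsilon)t} G(t) \geq G(0)$, and $\epsilon \to 0^+$ delivers $e^t G(t) \geq G(0)$. The principal difficulty of this plan is this final $L^2$-extension identification step, ensuring no constant is lost so that the Bergman kernel inequality transfers to a sharp bound on $G$.
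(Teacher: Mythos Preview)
Your overall strategy---perturb $\Psi_1$ to $(1+\epsilon)\Psi_1$ so that the inclusion $I((1+\epsilon)\Psi_1)_o \subset I_+(\Psi_1)_o$ makes Theorem~\ref{increasing} applicable, then send $\lambda \to +\infty$ and finally $\epsilon \to 0^+$---is exactly the paper's proof (written there with $p = 1+\epsilon \in (1,2)$; note the paper rewrites $p\Psi_1$ in the form $\min\{\psi'-2\log|F^2|,0\}$ to verify the hypotheses of Theorem~\ref{increasing}). Your choice of the single functional $\xi = \langle\,\cdot\,, \tilde f^\ast\rangle$ is slightly more direct than the paper's route through Lemma~\ref{B=C} (supremum over all admissible $\xi$), but both arrive at the same inequality $\|f\|^2_{\lambda,s} \geq e^{-s} G(0)$.

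The one genuine problem is your detour through the \emph{stronger} claim $e^t G(t) \geq G(0)$. The proposed $L^2$-extension step---extending an arbitrary competitor $\tilde f \in \mathcal{O}(\{\Psi^\epsilon < -s\})$ holomorphically to all of $\{\Psi_1 < 0\}$ while keeping its values on the sublevel set---is not an Ohsawa--Takegoshi phenomenon (those theorems extend from analytic subvarieties, not from open subdomains) and fails in general. Fortunately the Corollary only asks for the lower bound on $\int_{\{\Psi_1 < -t\}}|f|^2$, not on $G(t)$: simply take $\tilde f = f$ in your own inequality $\|\tilde f\|^2_{\lambda,s} \geq e^{-s} G(0)$, let $\lambda\to+\infty$ and $\epsilon\to 0^+$, and the extension difficulty you flagged disappears entirely. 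This is precisely what the paper does. (The paper also defers the strict positivity $G(0) > 0$, i.e.\ $f_o \notin I_+(\Psi_1)_o$, to \cite{BGY} via Lemma~\ref{fonotinI+}; your H\"older sketch for this is along the right lines, though it needs a small extra argument on the region $\{\Psi_1 \geq -t\}$ near $o$ to get integrability on a full neighborhood.)
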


\begin{Remark}
	The proof of the inequality $G(0;\Psi_1,I_+(\Psi_1)_o,f)>0$ can be referred to \cite{BGY}.
\end{Remark}

When $f\equiv 1$, Corollary \ref{J-M+f} deduces a reproof of the sharp effectiveness result related to a conjecture posed by Jonsson-Musta\c{t}\u{a}.
\begin{Corollary}[\cite{BGY}]\label{J-M}
	If $f\in A^2(\{\Psi_1<0\})$ and $c_o^{F}(\psi)<+\infty$, then for any $r\in (0,1]$,
	\[\frac{1}{r^2}\mu(\{c_o^{F}(\psi)\psi-\log|F|<\log r\})\geq G(0;\Psi_1,I_+(\Psi_1)_o,1)>0,\]
	where $\Psi_1:=\min\{2c_o^F(\psi)\psi-2\log|F|,0\}$, and $c_o^{F}(\psi):=\sup\{c\geq 0 : |F|^2e^{-2c\psi}$ is locally $L^1$ near $o \}$.
\end{Corollary}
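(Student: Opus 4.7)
The plan is to derive Corollary \ref{J-M} as the direct specialization of Corollary \ref{J-M+f} to the constant function $f \equiv 1$. With this choice, one has $c_o^{fF}(\psi) = c_o^{F}(\psi)$, so the weight $\Psi_1 = \min\{2c_o^{fF}(\psi)\psi - 2\log|F|,0\}$ appearing in Corollary \ref{J-M+f} agrees with the weight $\Psi_1 = \min\{2c_o^{F}(\psi)\psi - 2\log|F|,0\}$ appearing in Corollary \ref{J-M}, and the finiteness assumption $c_o^{F}(\psi) < +\infty$ matches the corresponding hypothesis of Corollary \ref{J-M+f}. The $A^2$-integrability hypothesis in Corollary \ref{J-M}, read with $f = 1$, becomes $1 \in A^2(\{\Psi_1 < 0\})$, i.e.\ $\mu(\{\Psi_1 < 0\}) < +\infty$, which is precisely what Corollary \ref{J-M+f} requires of the datum.

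With the hypotheses aligned, substituting $f \equiv 1$ into the conclusion of Corollary \ref{J-M+f} yields, for every $r \in (0,1]$,
\[\frac{1}{r^2}\int_{\{c_o^{F}(\psi)\psi - \log|F| < \log r\}} |1|^2 \;\geq\; G(0;\Psi_1, I_+(\Psi_1)_o, 1) \;>\; 0,\]
and since the integrand is identically $1$, the left-hand side equals $\frac{1}{r^2}\mu(\{c_o^{F}(\psi)\psi - \log|F| < \log r\})$. This is the content of Corollary \ref{J-M}. The strict positivity on the right was already noted (via the remark after Corollary \ref{J-M+f}) to be proved in \cite{BGY}, so nothing further is needed there.

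There is essentially no analytic obstacle at this stage: the substantive work has been done in Corollary \ref{J-M+f}, which is itself obtained by applying Theorem \ref{increasing} to an appropriate evaluation functional in $A^2(\{\Psi_1<0\})^{*}$. The only points that require care are the book-keeping issues, namely that taking $f = 1$ consistently aligns the definitions of $\Psi_1$, the critical integrability index $c_o^{F}(\psi) = c_o^{fF}(\psi)$, the integrand $|f|^2 = 1$, and the module $I_+(\Psi_1)_o$ appearing on both sides of the inequality.
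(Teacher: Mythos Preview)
Your proposal is correct and takes essentially the same approach as the paper: the paper explicitly states that Corollary~\ref{J-M} follows from Corollary~\ref{J-M+f} by taking $f\equiv 1$, and you have carried out precisely this specialization, verifying that the jumping number, the weight $\Psi_1$, the integrability hypothesis, and the conclusion all match.
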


Let $\varphi_0$ be a plurisubharmonic function on $D$, and let $f$ be a holomorphic function on $\{\Psi<0\}$. Denote that $a_o^f(\Psi;\varphi_0):=\sup \{a\geq 0 : f_o\in I(2a\Psi+\varphi_0)_o\}$, 
\[I_+(a\Psi_1+\varphi_0)_o:=\bigcup_{a'>a}I(a'\Psi_1+\varphi_0)_o\]
for any $a\geq 0$, and
\[C(\Psi,\varphi_0,J,f):=\inf\left\{\int_{\{\Psi<0\}}|\tilde{f}|^2e^{-\varphi_0} :  (\tilde{f}-f)_o\in J \ \& \ \tilde{f}\in\mathcal{O}(\{\Psi<0\})\right\}\]
for any $\mathcal{O}_o-$submodule $J$ of $I(\varphi_0)_o$. The following effectiveness result of strong openness property of the module $I(a\Psi+\varphi_0)_o$  can be reproved by Theorem \ref{increasing}.

\begin{Corollary}[\cite{GMY-BC2}]\label{SOPE}
	Let $C_1$ and $C_2$ be two positive constants. If
	
	(1) $\int_{\{\Psi<0\}}|f|^2e^{-\varphi_0-\Psi}\leq C_1$;
	
	(2) $C(\Psi,\varphi_0,I_+(2a_o^f(\Psi;\varphi_0)\Psi+\varphi_0)_o,f)\geq C_2$,\\	
	then for any $q>1$ satisfying
	\[\theta(q)>\frac{C_1}{C_2},\]
	we have $f_o\in I(q\Psi+\varphi_0)_o$, where $\theta(q)=\frac{q}{q-1}$.
\end{Corollary}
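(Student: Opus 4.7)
The plan is to apply Theorem~\ref{increasing} to a linear functional $\xi$ built from the minimiser defining $C(\Psi,\varphi_0,J,f)$, where $J:=I_+(2a_o^f(\Psi;\varphi_0)\Psi+\varphi_0)_o$ is the submodule appearing in hypothesis~(2), and then use the resulting concavity and monotonicity of $t\mapsto-\log K^{\varphi_0}_{\xi,\Psi,\lambda}(t)+t$ to deduce $f_o\in I(q\Psi+\varphi_0)_o$. Since $-\Psi\geq 0$, hypothesis~(1) gives $f\in A^{2}(\{\Psi<0\},e^{-\varphi_0})$ with $\|f\|_{A^2}^{2}\leq C_1$. Writing $a:=a_o^f(\Psi;\varphi_0)$, the definition of $a$ as a supremum forces $f_o\notin J$, so $A^{2}(\{\Psi<0\},e^{-\varphi_0})\cap J$ is a proper closed subspace. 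Let $g$ be the orthogonal projection of $f$ onto the orthogonal complement of $A^{2}(\{\Psi<0\},e^{-\varphi_0})\cap J$ in $A^{2}(\{\Psi<0\},e^{-\varphi_0})$; then $\|g\|^{2}=:C$ coincides with $C(\Psi,\varphi_0,J,f)$, and by the two hypotheses $C\in[C_2,C_1]$. Define $\xi\in A^{2}(\{\Psi<0\},e^{-\varphi_0})^{*}$ by $\xi(h):=\int_{\{\Psi<0\}}h\overline{g}\,e^{-\varphi_0}$, so that $\xi|_{A^{2}\cap J}\equiv 0$, $\xi(f)=C$ and $K^{\varphi_0}_{\xi,\Psi,\lambda}(0)=C\in(0,+\infty)$ for every $\lambda>0$.

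Granting the inclusion $J\supset I(\Psi+\varphi_0)_o$ demanded by Theorem~\ref{increasing} (discussed below), I apply the theorem with $\lambda=1$ to conclude that $G(t):=-\log K^{\varphi_0}_{\xi,\Psi,1}(t)+t$ is concave and increasing on $[0,+\infty)$ with $G(0)=-\log C$. Substituting $h=f$ into the supremum defining $K^{\varphi_0}_{\xi,\Psi,1}$ yields $K^{\varphi_0}_{\xi,\Psi,1}(t)\geq C^{2}/\|f\|_{1,t}^{2}$, while a layer-cake computation using $e^{-\Psi}\geq e^{t}$ on $\{\Psi<-t\}$ gives $e^{t}\|f\|_{1,t}^{2}\leq\int|f|^{2}e^{-\varphi_0-\Psi}\leq C_1$; together these yield $G(t)\leq\log(C_1/C^{2})$ for every $t\geq 0$, so $G$ is a bounded, concave, increasing function on $[0,+\infty)$.

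The heart of the argument is to convert this behaviour of $G$ into $f_o\in I(q\Psi+\varphi_0)_o$. Via the layer-cake identity $\int|f|^{2}e^{-q\Psi-\varphi_0}=\int|f|^{2}e^{-\varphi_0}+q\int_{0}^{\infty}e^{qs}B(s)\,ds$ with $B(s):=\int_{\{\Psi<-s\}}|f|^{2}e^{-\varphi_0}$, the task reduces to showing $\int_{0}^{\infty}e^{qs}B(s)\,ds<+\infty$; the Bergman-kernel inequality $\|f\|_{1,s}^{2}\geq C^{2}/K^{\varphi_0}_{\xi,\Psi,1}(s)$ together with the concavity and monotonicity of $G$ produces a sharp pointwise decay on $B(s)$, and the factor $\theta(q)=q/(q-1)=\int_{0}^{\infty}qe^{-(q-1)s}\,ds$ emerges upon integrating against $e^{qs}$, the assumption $\theta(q)>C_1/C_2$ being exactly what forces this integral to be finite. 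The principal technical obstacle is verifying $J\supset I(\Psi+\varphi_0)_o$: since hypothesis~(1) implies $f_o\in I(\Psi+\varphi_0)_o$ we have $a\geq 1/2$, so $J$ sits inside rather than contains $I(\Psi+\varphi_0)_o$. I expect to circumvent this by rescaling $\Psi$ to $\tilde\Psi:=c\Psi$ for any $c>2a$, for which $a_o^f(\tilde\Psi;\varphi_0)=a/c<1/2$ and $J\supset I(\tilde\Psi+\varphi_0)_o$; applying Theorem~\ref{increasing} to $\tilde\Psi$ and translating the conclusion back through $\tau=ct$ shows that $-\log K^{\varphi_0}_{\xi,\Psi,\lambda}(t)+ct$ is concave and increasing in $t$ for every $c>2a$, and letting $c\downarrow 2a$ with Remark~\ref{notconstant} handling the borderline case supplies the estimate needed to complete the argument above.
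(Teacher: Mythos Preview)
Your setup is sound and in fact close to the paper's: you correctly isolate the obstacle $J\supset I(\Psi+\varphi_0)_o$ and your rescaling fix $\tilde\Psi=c\Psi$ with $c>2a$ is exactly what the paper does (there phrased as working with $\Psi_q=q\Psi$ and the smaller module $J_q=I(q\Psi+\varphi_0)_o\subset J$). The choice of $\xi$ via the orthogonal projection is the functional realising the supremum in Lemma~\ref{B=C}, so that part also matches.

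The genuine gap is in what you call ``the heart of the argument''. Theorem~\ref{increasing} (applied to $\tilde\Psi=c\Psi$) and the passage $\lambda\to+\infty$ yield the \emph{lower} bound
\[
\int_{\{c\Psi<-t\}}|f|^{2}e^{-\varphi_0}\ \ge\ e^{-t}\,C(\Psi,\varphi_0,J,f)\ \ge\ e^{-t}C_2,
\]
not an upper bound on $B(s)$. The inequality $\|f\|_{1,s}^{2}\ge C^{2}/K^{\varphi_0}_{\xi,\Psi,1}(s)$ goes the same way: combined with the monotonicity of $G$ it bounds $\|f\|_{1,s}^{2}$ from below, and your separate estimate $e^{t}\|f\|_{1,t}^{2}\le C_1$ only gives $B(s)\le C_1e^{-s}$, which is useless for $\int_0^\infty e^{qs}B(s)\,ds$ when $q>1$. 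Concavity and boundedness of $G$ give a lower bound $K(t)\ge (C^{2}/C_1)e^{t}$, but a lower bound on a supremum does not constrain $\|f\|_{1,t}^{2}$ for your specific $f$. So the plan of showing $\int|f|^{2}e^{-q\Psi-\varphi_0}<+\infty$ directly cannot be carried out with these tools; Remark~\ref{notconstant} does not help here either.

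The paper instead argues by contrapositive. From the displayed lower bound (with $c=q$) and Fubini,
\[
C_1\ \ge\ \int_{\{\Psi<0\}}|f|^{2}e^{-\varphi_0-\Psi}
=\int_{-\infty}^{+\infty}\!\Big(\int_{\{q\Psi<-qt\}}|f|^{2}e^{-\varphi_0}\Big)e^{t}\,dt
\ \ge\ \frac{q}{q-1}\,C_2
\]
for every $q>2a$, hence for every $q\ge 2a$ by letting $q\downarrow 2a$. Thus $\theta(q)>C_1/C_2$ forces $q<2a$, which by definition of $a=a_o^f(\Psi;\varphi_0)$ gives $f_o\in I(q\Psi+\varphi_0)_o$. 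Your layer-cake should be applied to the \emph{known} finite integral $\int|f|^{2}e^{-\varphi_0-\Psi}\le C_1$, not to the target integral $\int|f|^{2}e^{-q\Psi-\varphi_0}$.
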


\section{Preparations}
\subsection{$L^2$ methods}
\

We recall the optimal $L^2$ extension theorem.

Let $\Omega$ be a pseudoconvex domain in $\mathbb{C}^{n+1}$ with coordinate $(z,t)$, where $z\in\mathbb{C}^n$, $t\in\mathbb{C}$. Let $p$ be the natural projections $p(z,t)=t$ on $\Omega$. Denote that $\omega:=p(\Omega)$ and $\Omega_t:=p^{-1}(t)$ for any $t\in\omega$. Let $\varphi$ be a plurisubharmonic function on $\Omega$.

\begin{Lemma}[Optimal $L^2$ extension theorem (\cite{Blocki}, see \cite{guan-zhou12,guan-zhou13ap,guan-zhou13p})]\label{L2ext}
	Let $\omega=\Delta_{t_0,r}$ be the disc in the complex plane centered at $t_0$ with radius $r$. Then for any $f$ in $A^2(\Omega_{t_0},e^{-\varphi})$, there exists a holomorphic function $\tilde{f}$ on $\Omega$, such that $\tilde{f}|_{\Omega_{t_0}}=f$, and
	\[\frac{1}{\pi r^2}\int_{\Omega}|\tilde{f}|^2e^{-\varphi}\leq\int_{\Omega_{t_0}}|f|^2e^{-\varphi}.\]
\end{Lemma}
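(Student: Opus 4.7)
The plan is to prove this sharp $L^{2}$ extension through the twisted Bochner-Kodaira-Nakano technique, reducing the extension problem to a $\bar\partial$-equation on $\Omega$ weighted by a function that blows up along the fiber $\Omega_{t_{0}}$. Following the Blocki and Guan-Zhou strategy, the optimal constant $\tfrac{1}{\pi r^{2}}$ emerges from an explicit ODE satisfied by a carefully chosen pair of auxiliary weight functions.

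First I would perform standard reductions: exhaust $\Omega$ by smoothly bounded, relatively compact, strictly pseudoconvex subdomains, replace $\varphi$ by a decreasing sequence of smooth strictly plurisubharmonic approximants, and translate so that $t_{0}=0$. I would then extend $f\in A^{2}(\Omega_{0},e^{-\varphi})$ to a smooth function $F$ defined in a neighborhood of $\overline{\Omega}_{0}$ in $\Omega$ with $F|_{\Omega_{0}}=f$, using a local holomorphic retraction onto the fiber so that $\bar\partial F$ vanishes to first order on $\Omega_{0}$.

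For each truncation parameter $A>0$, introduce a smooth negative concave approximation $s_{A}$ of $\max\{\log(|t|^{2}/r^{2}),-A\}$ and a smooth cutoff $\chi_{A}$ equal to $1$ near $\Omega_{0}$, then solve $\bar\partial u=\bar\partial(\chi_{A}F)$ in the Hilbert space $L^{2}(\Omega,e^{-\varphi-\eta(s_{A})})$ for auxiliary weights $\eta(s)>0$ and $\lambda(s)>0$. Setting $\tilde{f}_{A}:=\chi_{A}F-u$ produces a holomorphic function on $\Omega$ whose restriction to $\Omega_{0}$ equals $f$, since the blow-up of the weight along the fiber forces $u|_{\Omega_{0}}=0$. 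The twisted Bochner-Kodaira-Nakano inequality, applied with the plurisubharmonicity of $\varphi+\eta(s_{A})$ and the pseudoconvexity of $\Omega$, yields an $L^{2}$ bound on $u$ controlled by an integral of $|\bar\partial(\chi_{A}F)|^{2}$ against a factor built from $\eta$ and $\lambda$.

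The decisive step is selecting $(\eta,\lambda)$ so that this factor collapses the right-hand side to an integral concentrated on $\Omega_{0}$ with the exact constant $\tfrac{1}{\pi r^{2}}$; this requires that $(\eta,\lambda)$ satisfy an ODE relation whose canonical solution makes the fiber-direction integral $\int_{\Delta_{r}} e^{-\eta\circ s_{A}}\, d\mu(t)$ contribute exactly $\pi r^{2}$ in the limit $A\to\infty$. Passing to weak limits in $L^{2}_{\mathrm{loc}}$ as $A\to\infty$ and unwinding the initial reductions then produces the desired $\tilde{f}\in\mathcal{O}(\Omega)$ with the stated inequality. The main obstacle is precisely this ODE selection: the twisted inequality supplies a family of candidate estimates parametrized by $(\eta,\lambda)$, and realizing the sharp constant $\tfrac{1}{\pi r^{2}}$, rather than some suboptimal $C/r^{2}$, is the entire analytic content of the theorem and cannot be bypassed by routine calculation.
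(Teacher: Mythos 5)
The paper treats Lemma~\ref{L2ext} as a citation, not as something it proves: the result is quoted from \cite{Blocki} and \cite{guan-zhou12,guan-zhou13ap,guan-zhou13p} with no in-paper argument, so there is no proof here for your proposal to be set against. Your outline does reproduce the general shape of the argument in those references: regularize the weight and exhaust the domain, extend $f$ to a smooth competitor near $\Omega_{t_0}$, cut it off, twist the $L^2$ norm by a function singular along $\{t=t_0\}$, solve a twisted $\bar\partial$-equation by a Bochner--Kodaira--Nakano type estimate, and pass to limits.

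However, as written there is a genuine gap, one you effectively concede in your final paragraph. The entire content of the statement is the sharp constant $1/(\pi r^2)$; your sketch never writes down the twisted a priori inequality, never pins down $s_A$, and never produces or solves the ODE for $(\eta,\lambda)$. It simply asserts that the fiber integral $\int_{\Delta(t_0,r)} e^{-\eta\circ s_A}$ contributes exactly $\pi r^2$ in the limit, but that is precisely the theorem and is where all the work lives. Without the concrete pair (in Blocki's setup a pair $g,h$ built from $e^{-s}$; in Guan--Zhou's a pair $b,c$ satisfying a specified ODE) together with the limit computation, the argument only yields Ohsawa--Takegoshi with an unspecified constant $C/r^2$, not the optimal one. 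Two subsidiary claims also need verification rather than assertion: that the blow-up of the twist ``forces $u|_{\Omega_{t_0}}=0$'' depends on non-integrability of $e^{-\eta(s_A)}$ transverse to $\Omega_{t_0}$ after $A\to\infty$, and the source term $\bar\partial(\chi_A F)$ must be shown to vanish in the twisted norm, which hinges on the interaction between the cutoff $\chi_A$ and the order of vanishing of $\bar\partial F$ along the fiber.
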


The following $L^2$ method will be used to prove Theorem \ref{increasing}.

Let $D$ be a pseudoconvex domain in $\mathbb{C}^n$, and the origin $o\in D$. Let $F\not\equiv 0$ be a holomorphic function on $D$, and $\psi$ be a negative plurisubharmonic function on $D$. Let $\varphi_0$ be a plurisubharmonic function on $D$. Denote that
\[\varphi:=\varphi_0+2\max\{\psi,2\log|F|\},\]
and
\[\Psi:=\min\{\psi-2\log|F|, 0\}.\]
If $F(w)=0$ for $w\in D$, set $\Psi(w)=0$.

\begin{Lemma}[see \cite{guan-zhou13ap,GZeff,BGY,GMY-BC2}]\label{L2mthod}
	Let $t_0\in (0,+\infty)$ be arbitrary given. Let $f$ be a holomorphic function on $\{\Psi<-t_0\}$ such that
	\[\int_{\{\Psi<-t_0\}\cap K}|f|^2e^{-\varphi_0}<+\infty\]
	for any compact subset $K\subset D$. Then there exists a holomorphic function $\tilde{F}$ on $D$ such that
\[\int_D|\tilde{F}-(1-b_{t_0}(\Psi))fF^2|^2e^{-\varphi+v_{t_0}(\Psi)-\Psi}\leq C\int_D\mathbb{I}_{\{-t_0-1<\Psi<-t_0\}}|f|^2e^{-\varphi_0-\Psi},\]
	where $b_{t_0}(t)=\int_{-\infty}^t\mathbb{I}_{\{-t_0-1<s<-t_0\}}\mathrm{d}s$, $v_{t_0}(t)=\int_0^tb_{t_0}(s)\mathrm{d}s$ and $C$ is a positive constant.
\end{Lemma}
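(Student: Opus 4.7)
The plan is to obtain $\tilde F$ as a twisted $\bar\partial$-correction of the cut-off extension $(1-b_{t_0}(\Psi))\,fF^{2}$, in the Guan--Zhou style proof of the optimal Ohsawa--Takegoshi theorem. Since $b_{t_0}$ equals $0$ on $(-\infty,-t_0-1]$ and $1$ on $[-t_0,+\infty)$, the function $(1-b_{t_0}(\Psi))\,fF^{2}$ is well defined on all of $D$ — it vanishes identically near $\{F=0\}\subset\{\Psi=0\}$, so the a priori singularity of $f$ there is irrelevant — and its $\bar\partial$ is the $\bar\partial$-closed $(0,1)$-form
\[
\alpha\;:=\;-\,b'_{t_0}(\Psi)\,fF^{2}\,\bar\partial\Psi,
\]
supported in the compact strip $\{-t_0-1<\Psi<-t_0\}\subset\{\Psi<-t_0\}$, where $f$ is holomorphic. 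I would then set $\tilde F:=(1-b_{t_0}(\Psi))\,fF^{2}-u$ for a suitable solution $u$ of $\bar\partial u=\alpha$.

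The core step is to solve $\bar\partial u=\alpha$ on $D$ with the weighted bound
\[
\int_{D}|u|^{2}\,e^{-\varphi+v_{t_0}(\Psi)-\Psi}\;\leq\;C\int_{D}\mathbb{I}_{\{-t_0-1<\Psi<-t_0\}}\,|f|^{2}\,e^{-\varphi_{0}-\Psi}.
\]
For this I would appeal to a Donnelly--Fefferman/Chen-type twisted Hörmander estimate with base weight $\Phi:=\varphi-v_{t_0}(\Psi)$ and a ``twist'' carrying the $-\Psi$ factor. The algebraic identity $\max\{\psi,2\log|F|\}=\psi-\Psi$ rewrites $\varphi=\varphi_{0}+2\psi-2\Psi$, so that up to the plurisubharmonic piece $\varphi_{0}+2\psi$ the nontrivial weight is a convex increasing function of $\Psi$; plurisubharmonicity then follows from the fact that $-\Psi=\max\{2\log|F|-\psi,0\}$ is plurisubharmonic. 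The additional Levi form $i\partial\bar\partial v_{t_0}(\Psi)=b'_{t_0}(\Psi)\,i\partial\bar\partial\Psi+b''_{t_0}(\Psi)\,i\partial\Psi\wedge\bar\partial\Psi$ enters with the right sign to absorb precisely the singular piece of $\alpha$.

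Once $u$ is produced, $\tilde F:=(1-b_{t_0}(\Psi))\,fF^{2}-u$ is $\bar\partial$-closed by construction. Near $\{F=0\}$ both summands are holomorphic (the first vanishes in a neighborhood, while $u$ is holomorphic there because $\alpha\equiv 0$ there), and the $|F|^{-4}$ singularity of $e^{-\varphi}$ on $\{\Psi<0\}$ forces $u$ to vanish to order $\geq 2$ on $\{F=0\}$ via the usual Riemann-type removability, making $\tilde F$ a global element of $\mathcal{O}(D)$. The desired $L^{2}$ bound then follows by combining the $\bar\partial$-estimate with the trivial inequality $b'_{t_0}(\Psi)^{2}\leq b'_{t_0}(\Psi)\leq\mathbb{I}_{\{-t_0-1<\Psi<-t_0\}}$ and with $e^{-\varphi}|F|^{4}=e^{-\varphi_{0}}$ on $\{\Psi<0\}$.

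The main obstacle is the careful curvature/positivity computation underlying the twisted estimate: one must engineer the twist so that the term $|b'_{t_0}(\Psi)|^{2}\,|\bar\partial\Psi|^{2}_{i\partial\bar\partial\Phi}$ from $\alpha$ is controlled, via the Cauchy--Schwarz step of the Chen trick, by the $b'_{t_0}(\Psi)\,i\partial\Psi\wedge\bar\partial\Psi$ contribution from $-v_{t_0}(\Psi)$, yielding the absolute constant $C$. The surrounding technicalities — regularizing $\psi$ and $\log|F|$ so that $\Psi$, $b_{t_0}(\Psi)$, $v_{t_0}(\Psi)$ become of class $C^{2}$, exhausting $D$ by smooth relatively compact pseudoconvex subdomains, and passing to the limit by weak compactness of bounded sequences in $L^{2}$ with the relevant weights — are standard but need to be handled with care because of the $\min/\max$ structure of $\Psi$ and $\varphi$.
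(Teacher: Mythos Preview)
The paper does not prove this lemma; it quotes it from \cite{guan-zhou13ap,GZeff,BGY,GMY-BC2}. Your plan---cut off with $1-b_{t_0}(\Psi)$, solve a twisted $\bar\partial$-equation for the error, and read off the $L^2$ bound---is exactly the Guan--Zhou scheme carried out in those references, so at the level of strategy there is nothing to compare.

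Two points in your sketch need repair. First, the strip $\{-t_0-1<\Psi<-t_0\}$ is not compact in $D$; it is merely an open subset of $\{\Psi<-t_0\}$, which is all you need for $f$ to be holomorphic and for $\alpha$ to make sense there. Second, and more substantively, your claim that $-\Psi=\max\{2\log|F|-\psi,0\}$ is plurisubharmonic is false: away from $\{F=0\}$ the function $2\log|F|$ is pluriharmonic, so $2\log|F|-\psi$ has Levi form $-i\partial\bar\partial\psi\le 0$ and is plurisuperharmonic, and taking the maximum with $0$ does not yield a plurisubharmonic function. This does not break your argument, but you must reroute the curvature check: $\varphi=\varphi_0+2\max\{\psi,2\log|F|\}$ is plurisubharmonic directly, and the combination that actually governs the twisted estimate is
\[
\varphi+\Psi=\varphi_0+\psi+\max\{\psi,2\log|F|\},
\]
which is plurisubharmonic as well. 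The Guan--Zhou positivity bookkeeping uses this identity together with the convexity of $t\mapsto v_{t_0}(t)$, not plurisubharmonicity of $-\Psi$. With these corrections your outline matches the cited proofs.
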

	
\subsection{Some lemmas about submodules of $I(\varphi_0)_o$}
\

Recall that $D$ is a pseudoconvex domain in $\mathbb{C}^n$, and the origin $o\in D$. Let $F\not\equiv 0$ be a holomorphic function on $D$, and $\psi$ be a negative plurisubharmonic function on $D$. Let $\varphi_0$ be a plurisubharmonic function on $D$. Denote that
\[\Psi:=\min\{\psi-2\log|F|, 0\}.\]
If $F(w)=0$ for $w\in D$, set $\Psi(w)=0$. We recall the following lemma.

\begin{Lemma}[\cite{GMY-BC2}]\label{l:converge}
	Let $J_o$ be an $\mathcal{O}_{\mathbb{C}^n,o}-$submodule of $I(\varphi_0)_o$ such that $I(\Psi+\varphi_0)_o\subset J_o$. Assume that $f_o\in J(\Psi)_o$. Let $U_0$ be a Stein open neighborhood of $o$. Let $\{f_j\}_{j\geq 1}$ be a sequence of holomorphic functions on $U_0\cap\{\Psi<-t_j\}$ for any $j\geq 1$, where $t_j\in (T,+\infty)$. Assume that $t_0=\lim_{j\rightarrow+\infty}t_j\in [T,+\infty)$,
	\[\limsup_{j\rightarrow+\infty}\int_{U_0\cap\{\Psi<-t_j\}}|f_j|^2e^{-\varphi_0}\leq C<+\infty,\]
	and $(f_j-f)_o\in J_o$. Then there exists a subsequence of $\{f_j\}_{j\geq 1}$ compactly convergent to a holomorphic function $f_0$ on $\{\Psi<-t_0\}\cap U_0$ which satisfies
	\[\int_{U_0\cap\{\Psi<-t_0\}}|f_0|^2e^{-\varphi_0}\leq C,\]
	and $(f_0-f)_o\in J_o$.
\end{Lemma}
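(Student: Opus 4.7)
The plan is to prove the lemma in three stages: (a) extract a subsequence of $\{f_j\}$ converging locally uniformly on $U_0\cap\{\Psi<-t_0\}$, (b) pass the $L^2$ bound to the limit by Fatou, and (c) verify the germ-level condition $(f_0-f)_o\in J_o$. The first two are fairly routine; (c) is the main obstacle.

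For (a), I would exhaust $U_0\cap\{\Psi<-t_0\}$ by compact sets $K_m$ with $K_m\subset\{\Psi<-t_0-\delta_m\}$ for some $\delta_m>0$. Since $t_j\to t_0$, eventually $t_j<t_0+\delta_m$ and so $K_m\subset\{\Psi<-t_j\}$, whence $f_j$ is defined on $K_m$. The sub-mean-value property for holomorphic functions, combined with the local boundedness above of the plurisubharmonic weight $\varphi_0$, gives $\sup_{K_m}|f_j|^2\le C_{K_m}\int_{K'_m}|f_j|^2 e^{-\varphi_0}\le C_{K_m}\cdot C$ on a slightly larger compact $K'_m$. A diagonal Montel argument then produces a subsequence $f_{j_k}\to f_0$ locally uniformly on $U_0\cap\{\Psi<-t_0\}$ for some $f_0\in\mathcal{O}(U_0\cap\{\Psi<-t_0\})$. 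For (b), Fatou applied on each $K_m$, followed by monotone convergence in $m$, yields $\int_{U_0\cap\{\Psi<-t_0\}}|f_0|^2 e^{-\varphi_0}\le C$.

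The delicate step is (c). Since $J_o$ is an $\mathcal{O}_o$-submodule and each $(f_{j_k}-f)_o\in J_o$, it suffices to establish $(f_0-f_{j_k})_o\in J_o$ for some large $k$; using the hypothesis $I(\Psi+\varphi_0)_o\subset J_o$, I would aim to prove the stronger $(f_0-f_{j_k})_o\in I(\Psi+\varphi_0)_o$, i.e.\ that $\int_{V\cap\{\Psi<-T\}}|f_0-f_{j_k}|^2 e^{-\Psi-\varphi_0}<+\infty$ for some neighbourhood $V\ni o$ and some $T>0$. The main difficulty is that $e^{-\Psi}$ can blow up on $\{F=0\}\cap V$, so local uniform convergence of $f_{j_k}\to f_0$ is not by itself enough. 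My plan here is to apply Lemma \ref{L2mthod} on the Stein domain $U_0$ (or a suitable pseudoconvex subdomain containing $o$) at a truncation level $T$ slightly larger than $t_0$: it produces holomorphic extensions $\tilde F_k$ differing from $(1-b_T(\Psi))f_{j_k}F^2$ by a term whose germ at $o$ lies in $I(\Psi+\varphi_0)_o$, with error controlled by the shell integral $\int\mathbb I_{\{-T-1<\Psi<-T\}}|f_{j_k}|^2 e^{-\varphi_0-\Psi}$; this integral is finite because $e^{-\Psi}$ is uniformly bounded on the shell and step (b) gives a uniform $L^2$ bound. Comparing the $\tilde F_k$ across $k$ using the locally uniform convergence $f_{j_k}\to f_0$ on compact subsets of the shell, the differences tend to zero, and reading off germs at $o$ together with the inclusion $I(\Psi+\varphi_0)_o\subset J_o$ should deliver the desired membership $(f_0-f)_o\in J_o$.
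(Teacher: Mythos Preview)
The paper does not actually prove this lemma; it is quoted from \cite{GMY-BC2} without proof, so there is no in-paper argument to compare against. I will therefore just assess your outline on its own merits.

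Parts (a) and (b) are correct and standard. In (c), invoking Lemma~\ref{L2mthod} to produce global holomorphic functions $\tilde F_k$ on $U_0$ with $(\tilde F_k/F^2-f_{j_k})_o\in I(\Psi+\varphi_0)_o\subset J_o$ is exactly the right move, and it is essentially what the cited proof does. However, your closing sentence hides the real work. Two issues:

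First, the assertion that ``the differences tend to zero'' because $f_{j_k}\to f_0$ on compact subsets of the shell is not enough: the shell $\{-T-1<\Psi<-T\}\cap U_0$ need not be relatively compact, so locally uniform convergence does not force $\int_{\text{shell}}|f_{j_k}-f_{j_l}|^2e^{-\varphi_0-\Psi}\to 0$, and in any case the map $f\mapsto \tilde F$ from Lemma~\ref{L2mthod} is not linear, so you cannot identify $\tilde F_k-\tilde F_l$ with an extension of $f_{j_k}-f_{j_l}$.

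Second, and more fundamentally, you still have to pass the condition $(\tilde F_k/F^2-f)_o\in J_o$ to the limit, and this is \emph{the} point of the lemma; your phrase ``reading off germs at $o$'' is exactly the step that needs an argument. The standard way to close this is: extract a locally uniformly convergent subsequence $\tilde F_k\to\tilde F_\infty$ on $U_0$ (possible by the uniform $L^2$ bound coming from Lemma~\ref{L2mthod}); Fatou then gives $(\tilde F_\infty/F^2-f_0)_o\in I(\Psi+\varphi_0)_o$. Fixing one index $k_0$, the germs $(\tilde F_k-\tilde F_{k_0})_o$ lie in the \emph{ordinary} ideal $\mathcal J:=\{g\in\mathcal O_{\mathbb C^n,o}:(g/F^2)_o\in J_o\}$ of the Noetherian local ring $\mathcal O_{\mathbb C^n,o}$. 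Now one invokes the classical fact (coherence of $\mathcal O$, or equivalently the closedness of finitely generated submodules under locally uniform limits; see e.g.\ Grauert--Remmert) that such an ideal is closed, so $(\tilde F_\infty-\tilde F_{k_0})_o\in\mathcal J$, whence $(\tilde F_\infty/F^2-f)_o\in J_o$ and finally $(f_0-f)_o\in J_o$. Your outline is missing precisely this Noetherian closedness step; without it the argument is circular, since passing $J_o$-membership to a limit is what you are trying to prove.
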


It is well-known that $A^2(\{\Psi<0\},e^{-\varphi_0})$ is a Hilbert space. Let $J$ be an $\mathcal{O}_{o}-$submodule of $I(\varphi_0)_o$. We state that $A^2(\{\Psi<0\},e^{-\varphi_0})\cap J:=\{f\in A^2(\{\Psi<0\},e^{-\varphi_0}) : f_o\in J\}$ is a closed subspace of $A^2(\{\Psi<0\},e^{-\varphi_0})$ if $J\supset I(\Psi+\varphi_0)_o$.

\begin{Lemma}\label{Jclosed}
	If $J\supset I(\Psi+\varphi_0)_o$, then $A^2(\{\Psi<0\},e^{-\varphi_0})\cap J$ is closed in $A^2(\{\Psi<0\},e^{-\varphi_0})$.
\end{Lemma}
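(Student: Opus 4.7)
My plan is to verify closedness directly from the definition. Let $\{f_j\}_{j\geq 1}$ be a sequence in $A^2(\{\Psi<0\},e^{-\varphi_0})\cap J$ converging in norm to some $f\in A^2(\{\Psi<0\},e^{-\varphi_0})$; the goal is to show $f_o\in J$.

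First, I will promote weighted $L^2$-convergence to compact (locally uniform) convergence on $\{\Psi<0\}$. Since $\varphi_0$ is plurisubharmonic and hence upper semicontinuous, the weight $e^{-\varphi_0}$ is locally bounded below by a positive constant on every relatively compact subset of $\{\Psi<0\}$; combined with the submean-value inequality for holomorphic functions, this forces $f_j\to f$ locally uniformly on $\{\Psi<0\}$. In particular, $\|f_j\|_{L^2(\{\Psi<0\},e^{-\varphi_0})}^2$ is uniformly bounded by some constant $C<+\infty$.

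Next, I will invoke Lemma \ref{l:converge} to transfer $J$-membership to the limit. Fix a Stein open neighborhood $U_0$ of $o$ and any number $t_0>0$, and apply the lemma with the constant sequence $t_j\equiv t_0$, with the submodule being our $J\supset I(\Psi+\varphi_0)_o$, and with the distinguished element of $J(\Psi)_o$ chosen to be $0$. The hypothesis $(f_j-0)_o=(f_j)_o\in J$ is precisely the statement that $f_j\in A^2(\{\Psi<0\},e^{-\varphi_0})\cap J$, and the $L^2$-bound is inherited from the previous paragraph. The lemma then produces a subsequence $\{f_{j_k}\}$ converging compactly on $U_0\cap\{\Psi<-t_0\}$ to a holomorphic function $f_0$ with $(f_0)_o\in J$.

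To conclude, observe that the full sequence $f_j$ already converges compactly to $f$ on $\{\Psi<0\}\supset U_0\cap\{\Psi<-t_0\}$, so the compact limit of any subsequence on that set must coincide with $f|_{U_0\cap\{\Psi<-t_0\}}$. Therefore $f_0=f|_{U_0\cap\{\Psi<-t_0\}}$, and by the very definition of the germ map this gives $f_o=(f_0)_o\in J$, proving closedness. The main point I expect to have to keep honest is the promotion of $L^2$-convergence to local uniform convergence in the presence of a plurisubharmonic weight, but this is routine once one records that $\varphi_0$ is locally bounded above on $\{\Psi<0\}$; everything else is a bookkeeping application of the already-provided Lemma \ref{l:converge}.
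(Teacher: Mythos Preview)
Your proposal is correct and follows essentially the same approach as the paper: both promote the weighted $L^2$-convergence to compact convergence on $\{\Psi<0\}$ and then invoke Lemma~\ref{l:converge} with the reference germ taken to be $0$ to conclude that the limit lies in $J$. Your write-up is simply more explicit than the paper's about the choice of $U_0$, $t_j\equiv t_0$, and the identification of the subsequential limit furnished by Lemma~\ref{l:converge} with the original norm-limit $f$.
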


\begin{proof}
	Let $\{f_j\}$ be a sequence of holomorphic functions in $A^2(\{\Psi<0\},e^{-\varphi_0})\cap J$, such that $\lim_{j\rightarrow+\infty}f_j=f_0$ under the topology of $A^2(\{\Psi<0\},e^{-\varphi_0})$. Then $\{f_j\}$ compactly converges to $f_0$ on $\{\Psi<0\}$, and $(f_j-0)_o\in J$ for any $j$. According to Lemma \ref{l:converge}, we can get that $(f_0-0)_o\in J$, which means that $(f_0)_o\in J$, i.e. $f_0\in A^2(\{\Psi<0\},e^{-\varphi_0})\cap J$. The we know that $A^2(\{\Psi<0\},e^{-\varphi_0})\cap J$ is closed in $A^2(\{\Psi<0\},e^{-\varphi_0})$.
\end{proof}

The following two lemmas can be referred to \cite{BGY} or \cite{GMY-BC2}.

\begin{Lemma}[\cite{GMY-BC2}]\label{I=I+}
For any $a\geq 0$, there exists $a'>a$ such that $I(a'\Psi+\varphi_0)_o=I_+(a\Psi+\varphi_0)_o$.
\end{Lemma}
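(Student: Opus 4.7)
The inclusion $I(a'\Psi+\varphi_0)_o \subset I_+(a\Psi+\varphi_0)_o$ holds for every $a' > a$ directly from the definition of $I_+(a\Psi+\varphi_0)_o$, so the plan is to exhibit a single $a^* > a$ for which $I_+(a\Psi+\varphi_0)_o \subset I(a^*\Psi+\varphi_0)_o$. Equivalently, one has to show that the decreasing family $\{I(a'\Psi+\varphi_0)_o\}_{a'>a}$ of $\mathcal{O}_o$-submodules of $I(\varphi_0)_o$ stabilizes as $a' \downarrow a$.

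The strategy I would take is to reduce the problem to a strong openness / Noetherianity statement for standard multiplier ideal sheaves in $\mathcal{O}_o$. The starting point is the observation that on $\{\Psi < -t\}$ one has the identity $\Psi = \psi - 2\log|F|$, so the weight factors as $e^{-a'\Psi} = |F|^{2a'}e^{-a'\psi}$ on this region, and membership in $I(a'\Psi+\varphi_0)_o$ becomes an integrability condition against the ordinary plurisubharmonic weight $a'\psi - 2a'\log|F| + \varphi_0$. After multiplying by a sufficiently high power of $F$, one can apply the optimal $L^2$ extension theorem (Lemma \ref{L2ext}), together with the $L^2$-division machinery of Lemma \ref{L2mthod}, to replace the germ $f$, defined only on the boundary region $\{\Psi<-t\}\cap V$, by an honest holomorphic function on a neighborhood of $o$ whose weighted integrability is comparable.

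Once the problem is translated into an honest multiplier ideal setting, I would invoke the strong openness property of Guan--Zhou \cite{GZSOC} for the relevant weight, together with the Noetherianity of $\mathcal{O}_o$: the corresponding multiplier ideal is finitely generated, a finite generating set has a common integrability exponent strictly larger than $a$, and hence the union $\mathcal{I}_+$ collapses to a single $\mathcal{I}(\cdot)$. Translating back through the above correspondence then produces the desired $a^* > a$ for the original modules at the boundary point.

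The step I expect to be the main obstacle is precisely this translation. Elements of $I(a\Psi+\varphi_0)_o$ are equivalence classes of holomorphic functions defined only on the boundary regions $\{\Psi<-t\}\cap V$, not germs at $o$ in the usual sense; moreover, the convention $\Psi \equiv 0$ on $\{F = 0\}$ means that the boundary region structure interacts nontrivially with the zero set of $F$, so a priori the family $\{I(a'\Psi+\varphi_0)_o\}_{a'>a}$ need not be Noetherian in any obvious way. Carefully controlling the $L^2$ extension across $\{F=0\}$, and matching up the two weight systems so that finite generation on the multiplier ideal side transfers back to produce a single $a^*$ on the module side, is the delicate technical heart of the proof.
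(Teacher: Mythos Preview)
The paper does not prove this lemma itself; it is cited from \cite{BGY,GMY-BC2}. Your overall strategy --- reduce to a Noetherianity statement by relating the boundary modules $I(a'\Psi+\varphi_0)_o$ to honest ideals in $\mathcal{O}_o$ via the $L^2$ extension machinery of Lemma~\ref{L2mthod} --- is indeed the route taken in those references, and you correctly identify the translation across $\{F=0\}$ as the technical core.

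One point should be corrected, however. Your appeal to the strong openness property \cite{GZSOC} is unnecessary and, in this paper's context, circular: a stated aim of \cite{BGY} and of the present article is to develop these module-theoretic tools \emph{independently} of strong openness (indeed, to rederive it as a consequence). Fortunately the argument does not need it. Once the translation places $I_+(a\Psi+\varphi_0)_o$ inside a finitely generated $\mathcal{O}_o$-module --- for instance inside $I(a\Psi+\varphi_0)_o$, whose finite generation is precisely what the $L^2$ extension step establishes --- Noetherianity of $\mathcal{O}_o$ alone forces the submodule $I_+(a\Psi+\varphi_0)_o$ to be finitely generated. Its generators then lie in some $I(a_i\Psi+\varphi_0)_o$ with $a_i>a$ \emph{by the very definition} of $I_+$, not by any openness theorem, and taking $a^*=\min_i a_i$ finishes the argument. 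In other words, the sentence ``a finite generating set has a common integrability exponent strictly larger than $a$'' is tautological once you are working with $I_+$ rather than $I$; the citation of \cite{GZSOC} should simply be dropped.
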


Let $f$ be a holomorphic function on $D$. Denote that
\[\Psi_1:=\min\{2c_o^{fF}(\psi)\psi-2\log|F|,0\},\]
where $c_o^{fF}(\psi):=\sup\{c\geq 0 : |fF|^2e^{-2c\psi}$ is locally $L^1$ near $o \}$.

\begin{Lemma}[\cite{BGY}, see also \cite{GMY-BC2}]\label{fonotinI+}
	$f_o\notin I_+(\Psi_1)_o$.
\end{Lemma}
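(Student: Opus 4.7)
The plan is to argue by contradiction. Assume $f_o \in I_+(\Psi_1)_o$, so there exist $a > 1$, $t_0 > 0$ and a neighborhood $V_0$ of $o$ with $\int_{V_0 \cap \{\Psi_1 < -t_0\}} |f|^2 e^{-a\Psi_1} < +\infty$. Write $c := c_o^{fF}(\psi)$. The goal is to extract from this estimate a neighborhood $V$ of $o$ and some $c' > c$ with $\int_V |fF|^2 e^{-2c'\psi} < +\infty$; this will directly contradict the definition of the critical exponent $c$.

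The first step is preparation. Shrink $V \subset V_0$ to a relatively compact neighborhood of $o$ in $D$, so that $|f|$ and $|F|$ are bounded on $V$. On $\{\Psi_1 \geq -t_0\} \cap V$ we have $e^{-a\Psi_1} \leq e^{at_0}$, hence $\int_V |f|^2 e^{-a\Psi_1} < +\infty$. Using $\Psi_1 = 2c\psi - 2\log|F|$ on $\{\Psi_1 < 0\}$, together with $|F|^2 \leq e^{2c\psi}$ which characterizes $\{\Psi_1 = 0\}$, I get the pointwise bound $|f|^2 |F|^{2a} e^{-2ac\psi} \leq |f|^2 e^{-a\Psi_1}$ everywhere on $V$, whence $\int_V |f|^2 |F|^{2a} e^{-2ac\psi} < +\infty$.

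The key step is an application of H\"older's inequality. For any $c' > c$, set $\beta := (c'-c)a/(a-1) > 0$. A direct verification gives the pointwise identity
\[
|fF|^2 e^{-2c'\psi} = \bigl(|f|^2 |F|^{2a} e^{-2ac\psi}\bigr)^{1/a} \bigl(|f|^2 e^{-2\beta\psi}\bigr)^{(a-1)/a},
\]
so H\"older with conjugate exponents $a$ and $a/(a-1)$ yields
\[
\int_V |fF|^2 e^{-2c'\psi} \leq \Bigl(\int_V |f|^2 |F|^{2a} e^{-2ac\psi}\Bigr)^{1/a} \Bigl(\int_V |f|^2 e^{-2\beta\psi}\Bigr)^{(a-1)/a}.
\]
The first factor on the right is finite by the previous step; for the second, since $|f|$ is bounded on $V$ it suffices that $\int_V e^{-2\beta\psi} < +\infty$ for some $\beta > 0$. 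This follows, after possibly shrinking $V$, from the finiteness of the Lelong number of $\psi$ at $o$ combined with Skoda's $L^2$ integrability theorem. Choosing such a $\beta$ and the corresponding $c' = c + \beta(a-1)/a > c$ delivers $\int_V |fF|^2 e^{-2c'\psi} < +\infty$, contradicting the definition of $c = c_o^{fF}(\psi)$.

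The main obstacle is locating the correct H\"older decomposition so that the residual weight is of the tractable form $|f|^2 e^{-2\beta\psi}$, rather than something involving $|F|$ at a bad exponent that would fail to be locally integrable near $\{F = 0\}$. Apart from this algebraic bookkeeping, the only genuine analytic input is the local $L^1$ integrability of $e^{-2\beta\psi}$ for small $\beta > 0$, which is classical.
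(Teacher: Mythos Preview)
Your argument is correct. The paper does not give its own proof of this lemma; it is simply cited from \cite{BGY} (see also \cite{GMY-BC2}), so there is no in-paper proof to compare against. Your route---passing from the hypothesis to the pointwise bound $|F|^{2a}e^{-2ac\psi}\le e^{-a\Psi_1}$, then splitting $|fF|^2e^{-2c'\psi}$ via H\"older with conjugate exponents $(a,\,a/(a-1))$ and handling the residual factor $e^{-2\beta\psi}$ by Skoda's integrability theorem---is a clean, self-contained way to reach the contradiction with the definition of $c_o^{fF}(\psi)$.
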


\subsection{Some lemmas about functionals on $A^2(\{\Psi<0\},e^{-\varphi_0})$}
\

The following two lemmas will be used in the proof of Theorem \ref{concavity}. For the convenience of readers, we recall the proofs.

\begin{Lemma}\label{fjtof0}
	Let $D$ be a domain in $\mathbb{C}^n$, and let $\varphi_0$ be a plurisubharmonic function on $D$. Let $\{f_j\}$ be a sequence in $A^2(D,e^{-\varphi_0})$, such that $\int_{D}|f_j|^2e^{-\varphi_0}$ is uniformly bounded for any $j\in\mathbb{N}_+$. Assume that $f_j$ compactly converges to $f_0\in A^2(D,e^{-\varphi_0})$. Then for any $\xi\in A^2(D,e^{-\varphi_0})^*$,
	\[\lim_{j\rightarrow+\infty}\xi\cdot f_j=\xi\cdot f_0.\]
\end{Lemma}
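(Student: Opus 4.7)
The plan is to identify the conclusion as weak convergence of $\{f_j\}$ to $f_0$ in the Hilbert space $A^2(D,e^{-\varphi_0})$, then apply the continuous linear functional $\xi$. So I would first verify that the Hilbert space structure (completeness) is in place, and then set up a standard argument: boundedness $+$ uniqueness of the pointwise-compatible limit $\Rightarrow$ weak convergence of the entire sequence.

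Concretely, I would first take an arbitrary subsequence $\{f_{j_k}\}$ of $\{f_j\}$. Because $\sup_k\|f_{j_k}\|_{A^2(D,e^{-\varphi_0})}<+\infty$, and a bounded sequence in a Hilbert space has a weakly convergent subsequence, I can extract a further subsequence (still denoted $\{f_{j_k}\}$) and an element $g\in A^2(D,e^{-\varphi_0})$ such that $f_{j_k}\rightharpoonup g$ weakly. The next step is to show that $g=f_0$. For this I would use that for every $z\in D$, the evaluation functional $\delta_z\colon h\mapsto h(z)$ belongs to $A^2(D,e^{-\varphi_0})^*$: pick a small ball $B(z,r)\subset D$, on which the plurisubharmonic (hence upper semicontinuous) function $\varphi_0$ is bounded from above by some $M_{z,r}$, and use the sub-mean value inequality for $|h|^2$ together with $e^{-\varphi_0}\geq e^{-M_{z,r}}$ to get $|h(z)|^2\leq C_{z,r}\|h\|_{A^2(D,e^{-\varphi_0})}^2$. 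Weak convergence then gives $f_{j_k}(z)\to g(z)$ for every $z\in D$, while the hypothesis of compact convergence gives $f_{j_k}(z)\to f_0(z)$ pointwise as well. Hence $g=f_0$.

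To finish, I would observe that the argument just given shows every subsequence of $\{f_j\}$ admits a further subsequence converging weakly to the same limit $f_0$; this forces the full sequence $\{f_j\}$ to converge weakly to $f_0$. Applying $\xi\in A^2(D,e^{-\varphi_0})^*$ to the weakly convergent sequence yields $\xi\cdot f_j\to\xi\cdot f_0$, as desired.

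The only nontrivial point — really the single thing to be careful about — is the boundedness of point evaluation on $A^2(D,e^{-\varphi_0})$; this is where the plurisubharmonicity of $\varphi_0$ enters (so that the weight is locally bounded above) and where one uses the sub-mean value property of $|h|^2$ for holomorphic $h$. Once that is in hand, the rest is a routine Hilbert-space extraction argument.
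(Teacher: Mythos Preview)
Your proof is correct and follows essentially the same route as the paper: extract a weakly convergent subsequence from an arbitrary subsequence via Hilbert-space boundedness, identify the weak limit with $f_0$ using that point evaluations $e_z$ are bounded functionals, and conclude weak convergence of the full sequence. The paper's argument is identical in structure, though it asserts $e_z\in A^2(D,e^{-\varphi_0})^*$ without the justification you supply.
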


\begin{proof}
	For any $f\in A^2(D,e^{-\varphi_0})$, denote that $\|f\|^2:=\int_{D}|f|^2e^{-\varphi_0}$. Let $\{f_{k_j}\}$ be any subsequence of $\{f_j\}$. Since $A^2(D,e^{-\varphi_0})$ is a Hilbert space, and $\|f_{k_j}\|^2$ is uniformly bounded, there exists a subsequence of $\{f_{k_j}\}$ (denoted by $\{f_{k_{l_j}}\}$) weakly convergent to some $\tilde{f}\in A^2(D,e^{-\varphi_0})$. Note that for any $z\in D$, the functional $e_z\in A^2(D,e^{-\varphi_0})^*$, where
	\begin{flalign*}
		\begin{split}
			e_z \ : \ A^2(D,e^{-\varphi_0})&\longrightarrow\mathbb{C}\\
			f&\longmapsto f(z).
		\end{split}
	\end{flalign*}
	Then we have
	\[f_0(z)=\lim_{j\rightarrow+\infty}e_z\cdot f_j=\lim_{j\rightarrow+\infty}e_z\cdot f_{k_{l_j}}=e_z\cdot\tilde{f}=\tilde{f}(z), \ \forall z\in D,\]
	thus $f_0=\tilde{f}$. It means that $\{f_{k_j}\}$ has a subsequence weakly convergent to $f_0$. Since $\{f_{k_j}\}$ is an arbitrary subsequence of $\{f_j\}$, we get that $\{f_j\}$ weakly converges to $f_0$. In other words, for any $\xi\in A^2(D,e^{-\varphi_0})^*$,
	\[\lim_{j\rightarrow+\infty}\xi\cdot f_j=\xi\cdot f_0.\]
\end{proof}

Let $\Omega:=D\times\omega\subset\mathbb{C}^{n+1}$, where $D$ is a domain in $\mathbb{C}^n$, $\omega$ is a domain in $\mathbb{C}$. Denote the coordinate on $\Omega$ by $(z,\tau)$, where $z\in D$, $\tau\in \omega$.  Let $\varphi_0$ be a plurisubharmonic function on $D$. Let $f$ be a holomorphic function on $\Omega$, such that
\[\int_{\Omega}|f(z,\tau)|^2e^{-\varphi_0(z)}<+\infty.\]
Denote $f_{\tau}:=f|_{D\times\{\tau\}}$.

\begin{Lemma}\label{xihol}
	For any $\xi\in A^2(D,e^{-\varphi_0})^*$, $\xi\cdot f_{\tau}$ is holomorphic with respect to $\tau\in\omega$.
\end{Lemma}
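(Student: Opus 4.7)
The plan is to prove holomorphy of $\tau\mapsto\xi\cdot f_\tau$ via Morera's theorem, by first establishing that the map $\tau\mapsto f_\tau$ from $\omega$ into the Hilbert space $A^2(D,e^{-\varphi_0})$ is continuous, and then transferring Cauchy's theorem through the continuous linear functional $\xi$.

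First I would check that $f_\tau\in A^2(D,e^{-\varphi_0})$ for every $\tau\in\omega$ (not merely almost every $\tau$). Fix $\tau_0\in\omega$ and a disc $\Delta(\tau_0,2r)\Subset\omega$. Since $f$ is holomorphic on $\Omega$, the function $|f(z,\cdot)|^2$ is subharmonic on $\omega$ for each $z\in D$, so the sub-mean-value inequality gives
\[
|f(z,\tau)|^2\leq\frac{1}{\pi r^2}\int_{\Delta(\tau,r)}|f(z,\tau')|^2\,d\lambda(\tau')\leq\frac{1}{\pi r^2}\int_{\Delta(\tau_0,2r)}|f(z,\tau')|^2\,d\lambda(\tau')
\]
for every $\tau\in\Delta(\tau_0,r)$. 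Multiplying by $e^{-\varphi_0(z)}$ and integrating over $D$, Fubini together with the hypothesis $\int_\Omega|f|^2e^{-\varphi_0}<+\infty$ yields a finite bound on $\int_D|f(z,\tau)|^2e^{-\varphi_0(z)}$ uniform on $\Delta(\tau_0,r)$. Hence $f_\tau\in A^2(D,e^{-\varphi_0})$ everywhere on $\omega$, with norms locally bounded.

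Next I would establish that $\tau\mapsto f_\tau$ is continuous into $A^2(D,e^{-\varphi_0})$. For $\tau_j\to\tau_0$, the holomorphy of $f(z,\cdot)$ gives pointwise convergence $f(z,\tau_j)\to f(z,\tau_0)$, while the estimate above supplies an integrable majorant $h(z):=\frac{1}{\pi r^2}\int_{\Delta(\tau_0,2r)}|f(z,\tau')|^2\,d\lambda(\tau')$ dominating $|f(z,\tau_j)|^2e^{-\varphi_0(z)}$ for $j$ large. Dominated convergence then gives $\|f_{\tau_j}-f_{\tau_0}\|_{A^2}\to 0$. Composing with the bounded functional $\xi$, the scalar function $\tau\mapsto\xi\cdot f_\tau$ is continuous on $\omega$.

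Finally I would apply Morera. For any triangle (or any closed curve) $\gamma$ contained in a small disc in $\omega$, continuity of $\tau\mapsto f_\tau$ makes the $A^2$-valued Bochner integral $I_\gamma:=\int_\gamma f_\tau\,d\tau$ well-defined. Evaluation at a point $z\in D$ is a continuous linear functional on $A^2(D,e^{-\varphi_0})$ (as used in Lemma \ref{fjtof0}), so
\[
I_\gamma(z)=\int_\gamma f(z,\tau)\,d\tau=0
\]
by the ordinary Cauchy theorem applied to the holomorphic function $\tau\mapsto f(z,\tau)$; hence $I_\gamma=0$ in $A^2(D,e^{-\varphi_0})$. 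Commuting the bounded functional $\xi$ with the Bochner integral gives $\int_\gamma\xi\cdot f_\tau\,d\tau=\xi\cdot I_\gamma=0$, and Morera's theorem yields that $\xi\cdot f_\tau$ is holomorphic in $\tau\in\omega$. The main technical point in this plan is the continuity step, where one has to produce an integrable majorant via the subharmonicity of $|f(z,\cdot)|^2$; the rest is a standard transfer of scalar Cauchy theory through a bounded linear functional on a Hilbert space.
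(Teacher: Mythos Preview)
Your proof is correct but follows a different route from the paper's. The paper first proves (in an appendix) that the functionals in $\ell_0$---finite linear combinations of Taylor-coefficient evaluations $f\mapsto f^{(\alpha)}(z_0)/\alpha!$ at a fixed point---are dense in $A^2(D,e^{-\varphi_0})^*$; for $\xi_k\in\ell_0$ the map $\tau\mapsto\xi_k\cdot f_\tau$ is visibly holomorphic (a finite sum of $z$-derivatives of $f$ evaluated at $(z_0,\tau)$), and then the uniform bound $\|f_\tau\|\le M$ on small discs gives $|\xi_k\cdot f_\tau-\xi\cdot f_\tau|\le M^{1/2}\|\xi_k-\xi\|$, so Weierstrass yields the result. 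Your approach instead upgrades the local norm bound to norm-continuity of $\tau\mapsto f_\tau$ via dominated convergence, then pushes Cauchy's theorem through $\xi$ using a vector-valued (Bochner/Riemann) integral and Morera. Your argument is more self-contained in that it avoids the density lemma in the appendix and works for any bounded functional without approximation; the paper's argument, on the other hand, stays entirely within scalar function theory and needs only locally uniform convergence rather than a Bochner integral. Both hinge on the same sub-mean-value estimate for the first step.
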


\begin{proof}
	We only need to prove that $h(\tau):=\xi\cdot f_{\tau}$ is holomorphic near any $\tau_0\in\omega$. Since $\tau_0\in \omega$, there exists $r>0$ such that $\Delta(\tau_0,2r)\subset\subset\omega$. Then for any $\tau\in\Delta(\tau_0,r)$, according to sub-mean value inequality of subharmonic functions, we have
	\[\int_D|f_{\tau}(z)|^2e^{-\varphi_0(z)}\leq \frac{1}{\pi r^2}\int_{D\times\Delta(t,r)}|f(z,\tau)|^2e^{-\varphi_0(z)}\leq\frac{1}{\pi r^2}\int_{\Omega}|f|^2e^{-\varphi_0}<+\infty,\]
	which implies that $f_{\tau}\in A^2(D,e^{-\varphi_0})$ and there exists $M>0$ such that $\int_D|f_{\tau}|^2e^{-\varphi_0}\leq M$ for any $\tau\in\Delta(\tau_0,r)$.
	
	Fix $z_0\in D$. According to Lemma \ref{Tvarphi} in Appendix, we can find a sequence $\{\xi_k\}\subset \ell_0\subset A^2(D,e^{-\varphi_0})^*$, such that
	\[\lim_{k\rightarrow+\infty}\|\xi_k-\xi\|_{A^2(D,e^{-\varphi_0})^*}=0,\]
	where
	\[\ell_0:=\{\eta=(\eta_{\alpha})_{\alpha\in\mathbb{N}^n} : \exists k\in\mathbb{N}, \text{such\ that\ } \eta_{\alpha}=0, \ \forall |\alpha|\geq k\}.\]
	Here for any $\eta=(\eta_{\alpha})_{\alpha\in\mathbb{N}^n}\in\ell_0$ and $f\in A^2(D,e^{-\varphi_0})$, define that
	\[\eta\cdot f:=\sum_{\alpha\in\mathbb{N}^n}\eta_{\alpha}\frac{f^{(\alpha)}(z_0)}{\alpha!}.\]
	Note that for any $\xi_k\in \ell_0$, $\xi_k\cdot f_{\tau}$ can be written as
	\[\xi_k\cdot f_{\tau}=\sum_{\alpha\in\mathbb{N}^n, \ |\alpha|\leq l_k}c_{\alpha,k}\frac{\partial^{\alpha}f(z,\tau)}{\partial z^{\alpha}}(z_0,\tau),\]
	where $l_k$ is a finite integer, and $c_{\alpha,k}\in\mathbb{C}$ are constants. It is clear that $h_k(\tau):=\xi_k\cdot f_{\tau}$ is holomorphic with respect to $\tau\in\omega$ for any $k\in\mathbb{N}_+$, since any $\frac{\partial^{\alpha}f(z,\tau)}{\partial z^{\alpha}}(z_0,\tau)$ is holomorphic with respect to $\tau$ for any $\alpha\in\mathbb{N}_+$. Note that for any $\tau\in\Delta(\tau_0,r)$, we have
	\begin{flalign*}
		\begin{split}
			&|h_k(\tau)-h(\tau)|^2\\
			=&|(\xi_k-\xi)\cdot f_{\tau}|^2\\
			\leq&\|\xi_k-\xi\|^2_{A^2(D,e^{-\varphi_0})^*}\int_D|f_{\tau}|^2e^{-\varphi_0}\\
			\leq&M\|\xi_k-\xi\|^2_{A^2(D,e^{-\varphi_0})^*},
		\end{split}
	\end{flalign*}
	which means that $h_k$ uniformly converges to $h$ on $\Delta(\tau_0,r)$. According to Weierstrass theorem, we know that $h$ is holomorphic on $\Delta(\tau_0,r)$, i.e. near $\tau_0$. Then we get that $\xi\cdot f_{\tau}$ is holomorphic with respect to $\tau\in\omega$.
\end{proof}

\subsection{Some properties of $K^{\varphi_0}_{\xi,\Psi,\lambda}(t)$}
\

In this section, we prove some properties of the Bergman kernel $K^{\varphi_0}_{\xi,\Psi,\lambda}(t)$.

Let $\xi\in A^2(\{\Psi<0\},e^{-\varphi_0})^*\setminus\{0\}$.

\begin{Lemma}\label{sup=max}
	For any $t\in [0,+\infty)$, if $K^{\varphi_0}_{\xi,\Psi,\lambda}(t)\in (0,+\infty)$, then there exists $\tilde{f}\in A^2(\{\Psi<0\},e^{-\varphi_0})$, such that
	\[K^{\varphi_0}_{\xi,\Psi,\lambda}(t)=\frac{|\xi\cdot \tilde{f}|^2}{\|\tilde{f}\|_{\lambda,t}^2}.\]
\end{Lemma}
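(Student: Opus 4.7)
The plan is to apply the standard direct method of the calculus of variations: extract a limit from a normalized maximizing sequence, and check that this limit realizes the supremum. The key inputs are Montel's theorem (to extract a compactly convergent subsequence from an $L^2$-bounded family) and Lemma \ref{fjtof0} (to guarantee that $\xi$ passes to this limit).

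Concretely, since $K^{\varphi_0}_{\xi,\Psi,\lambda}(t)\in(0,+\infty)$, I would first pick a sequence $\{f_j\}\subset A^2(\{\Psi<0\},e^{-\varphi_0})$ with $\|f_j\|_{\lambda,t}=1$ for all $j$ and $|\xi\cdot f_j|^2\to K^{\varphi_0}_{\xi,\Psi,\lambda}(t)$. Using the inequality $\|f\|_{\lambda,0}\le e^{\lambda t/2}\|f\|_{\lambda,t}$ already recorded in the paper, the sequence is uniformly bounded in $A^2(\{\Psi<0\},e^{-\varphi_0})$ by $e^{\lambda t/2}$. Since $\varphi_0$ is plurisubharmonic and hence locally bounded above, $e^{-\varphi_0}$ is bounded below by a positive constant on any compact subset of $\{\Psi<0\}$; combined with the sub-mean value inequality applied to $|f_j|^2$, this yields a uniform sup-bound on compacta. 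By Montel's theorem I may then extract a subsequence $\{f_{j_k}\}$ compactly converging to a holomorphic function $f_0$ on $\{\Psi<0\}$. Fatou's lemma applied first with weight $e^{-\varphi_0}$ shows $f_0\in A^2(\{\Psi<0\},e^{-\varphi_0})$, and applied with weight $e^{-\varphi_0-\Psi_{\lambda,t}}$ gives $\|f_0\|_{\lambda,t}\le 1$.

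To close the argument, Lemma \ref{fjtof0} applies to $\{f_{j_k}\}$ (uniformly bounded $L^2$-norms, compact convergence to $f_0\in A^2$) and yields $\xi\cdot f_{j_k}\to\xi\cdot f_0$, hence $|\xi\cdot f_0|^2=K^{\varphi_0}_{\xi,\Psi,\lambda}(t)>0$. In particular $f_0\not\equiv 0$ and $\|f_0\|_{\lambda,t}>0$, so
\[
\frac{|\xi\cdot f_0|^2}{\|f_0\|_{\lambda,t}^2}\ \ge\ K^{\varphi_0}_{\xi,\Psi,\lambda}(t),
\]
while the reverse inequality is immediate from the definition of $K^{\varphi_0}_{\xi,\Psi,\lambda}(t)$ as a supremum. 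Equality then forces $\tilde f:=f_0$ to realize the supremum. The only step that requires any care is turning the $L^2$-bound into a local sup-bound needed for Montel, but this is standard once one uses that $\varphi_0$, being plurisubharmonic, is locally bounded above; there is no serious obstacle.
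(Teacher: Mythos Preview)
Your proof is correct and follows essentially the same approach as the paper: normalize a maximizing sequence, use the uniform $A^2$-bound plus Montel to extract a compact limit, apply Fatou to control $\|f_0\|_{\lambda,t}$, and invoke Lemma~\ref{fjtof0} to pass $\xi$ to the limit. You supply a bit more justification (the local sup-bound step for Montel and the observation that $f_0\not\equiv0$), but the argument is the same.
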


\begin{proof}
	By the definition of $K^{\varphi_0}_{\xi,\Psi,\lambda}(t)$, there exists a sequence $\{f_j\}$ of holomorphic functions in $A^2(\{\Psi<0\},e^{-\varphi_0})$, such that $\|f_j\|_{\lambda,t}=1$, and $\lim_{j\rightarrow+\infty}|\xi\cdot f_j|^2=K^{\varphi_0}_{\xi,\Psi,\lambda}(t)$. Then $\int_{\{\Psi<0\}}|f_j|^2e^{-\varphi_0}$ is uniformly bounded. Following from Montel's theorem, we can get a subsequence of $\{f_j\}$ compactly convergent to a holomorphic function $\tilde{f}$ on $\{\Psi<0\}$. According to Fatou's lemma, we have $\|\tilde{f}\|_{\lambda,t}\leq 1$, and according to Lemma \ref{fjtof0}, we have $|\xi\cdot \tilde{f}|^2=K^{\varphi_0}_{\xi,\Psi,\lambda}(t)$, thus $K^{\varphi_0}_{\xi,\Psi,\lambda}(t)\leq\frac{|\xi\cdot \tilde{f}|^2}{\|\tilde{f}\|_{\lambda,t}^2}$. Note that $\|\tilde{f}\|_{\lambda,t}\leq 1$ implies $\tilde{f}\in A^2(\{\Psi<0\},e^{-\varphi_0})$, which means $K^{\varphi_0}_{\xi,\Psi,\lambda}(t)\geq\frac{|\xi\cdot \tilde{f}|^2}{\|\tilde{f}\|_{\lambda,t}^2}$. We get that $K^{\varphi_0}_{\xi,\Psi,\lambda}(t)=\frac{|\xi\cdot \tilde{f}|^2}{\|\tilde{f}\|_{\lambda,t}^2}$.
\end{proof}

Let $J$ be an $\mathcal{O}_{o}-$submodule of $I(\varphi_0)_o$ such that $J\supset I(\Psi+\varphi_0)_o$, and let $f\in A^2(\{\Psi<0\},e^{-\varphi_0})$, such that $f_o\notin J$. Recall the minimal $L^2$ integral (\cite{BGY,GMY-BC2})
\[C(\Psi,\varphi_0,J,f):=\inf\left\{\int_{\{\Psi<0\}}|\tilde{f}|^2e^{-\varphi_0} :  (\tilde{f}-f)_o\in J \ \& \ \tilde{f}\in\mathcal{O}(\{\Psi<0\})\right\}.\]
Then the following lemma holds.

\begin{Lemma}\label{B=C}
	Assume that $C(\Psi,\varphi_0,J,f)\in (0,+\infty)$, then
	\begin{equation}
		C(\Psi,\varphi_0,J,f)=\sup_{\substack{\xi\in A^2(\{\Psi<0\},e^{-\varphi_0})^*\setminus\{0\}\\ \xi|_{A^2(\{\Psi<0\},e^{-\varphi_0})\cap J}\equiv 0}}\frac{|\xi\cdot f|^2}{K^{\varphi_0}_{\xi,\Psi,\lambda}(0)}.
	\end{equation}
\end{Lemma}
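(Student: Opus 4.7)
The plan is to recognize the identity as a standard Hilbert space duality between distance to a closed subspace and the supremum of bounded functionals vanishing on that subspace, once we correctly interpret both sides.

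First, I would make the substitution $g:=f-\tilde f$ in the definition of $C(\Psi,\varphi_0,J,f)$. The condition $(\tilde f-f)_o\in J$ translates to $g_o\in J$, and since the infimum defining $C$ is assumed finite, we may restrict attention to $\tilde f$ with $\int_{\{\Psi<0\}}|\tilde f|^2e^{-\varphi_0}<+\infty$, i.e.\ $\tilde f\in A^2(\{\Psi<0\},e^{-\varphi_0})$. Because $f\in A^2(\{\Psi<0\},e^{-\varphi_0})$, we then have $g=f-\tilde f\in A^2(\{\Psi<0\},e^{-\varphi_0})\cap J$, and conversely every such $g$ gives an admissible $\tilde f=f-g$. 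Hence
\[
C(\Psi,\varphi_0,J,f)\;=\;\inf_{g\in A^2(\{\Psi<0\},e^{-\varphi_0})\cap J}\int_{\{\Psi<0\}}|f-g|^2e^{-\varphi_0}\;=\;\mathrm{dist}(f,A^2(\{\Psi<0\},e^{-\varphi_0})\cap J)^2,
\]
the distance being measured in the Hilbert space $H:=A^2(\{\Psi<0\},e^{-\varphi_0})$.

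Next, by Lemma \ref{Jclosed}, the subspace $V:=A^2(\{\Psi<0\},e^{-\varphi_0})\cap J$ is closed in $H$ (this is where the hypothesis $J\supset I(\Psi+\varphi_0)_o$ enters). Standard Hilbert space duality (via orthogonal projection or Hahn--Banach plus Riesz representation) then gives
\[
\mathrm{dist}(f,V)^2\;=\;\sup_{\substack{\xi\in H^*\setminus\{0\}\\ \xi|_V\equiv 0}}\frac{|\xi\cdot f|^2}{\|\xi\|_{H^*}^2}.
\]
Indeed, writing $f=Pf+(I-P)f$ with $P$ the orthogonal projection onto $V$, Cauchy--Schwarz gives $|\xi\cdot f|=|\xi\cdot(I-P)f|\le\|\xi\|_{H^*}\|(I-P)f\|$ for every $\xi\in V^\perp$, and equality is attained by the Riesz functional associated with $(I-P)f$ (which is nonzero because $f_o\notin J$ forces $f\notin V$, consistent with $C>0$).

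Finally, I would identify the denominator: since $\Psi\le 0$ on $\{\Psi<0\}$, one has $\Psi_{\lambda,0}=\lambda\max\{\Psi,0\}\equiv 0$ on $\{\Psi<0\}$, so
\[
K^{\varphi_0}_{\xi,\Psi,\lambda}(0)\;=\;\sup_{f\in H}\frac{|\xi\cdot f|^2}{\int_{\{\Psi<0\}}|f|^2e^{-\varphi_0}}\;=\;\|\xi\|_{H^*}^2,
\]
independently of $\lambda$. Substituting this into the duality formula yields the claimed identity. There is no real obstacle here beyond setting up the Hilbert space picture correctly: the nontrivial input is the closedness of $V$ provided by Lemma \ref{Jclosed}, while the assumption $C(\Psi,\varphi_0,J,f)\in(0,+\infty)$ guarantees both that the infimum is attained in $A^2$ (so the substitution $g=f-\tilde f$ is lossless) and that the supremum on the right is taken over a nonempty set of nonzero functionals.
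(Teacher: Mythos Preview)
Your proposal is correct and follows essentially the same approach as the paper's proof: both arguments identify $K^{\varphi_0}_{\xi,\Psi,\lambda}(0)$ with $\|\xi\|_{H^*}^2$, invoke Lemma~\ref{Jclosed} for the closedness of $V=A^2(\{\Psi<0\},e^{-\varphi_0})\cap J$, and then use the orthogonal decomposition $f=f_J+f_H$ together with the Riesz functional $\xi_f(\cdot)=\langle\,\cdot\,,f_H\rangle$ to achieve equality. The only difference is presentational---you package the two inequalities as the standard Hilbert space identity $\mathrm{dist}(f,V)^2=\sup_{\xi|_V=0,\,\xi\neq 0}|\xi\cdot f|^2/\|\xi\|^2$, whereas the paper writes out each direction explicitly.
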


\begin{proof}
	Note that $\xi\cdot\tilde{f}=\xi\cdot f$ for any $\tilde{f}\in A^2(\{\Psi<0\},e^{-\varphi_0})$ with $(\tilde{f}-f)_o\in J$ and $\xi\in A^2(\{\Psi<0\},e^{-\varphi_0})^*$ satisfying $\xi|_{A^2(\{\Psi<0\},e^{-\varphi_0})\cap J}\equiv 0$. Then we have
	\begin{flalign*}
		\begin{split}
		K^{\varphi_0}_{\xi,\Psi,\lambda}(0)&=\sup_{h\in A^2(\{\Psi<0\},e^{-\varphi_0})}\frac{|\xi\cdot h|^2}{\int_{\{\Psi<0\}}|h|^2e^{-\varphi_0}}\\
		&\geq\sup_{\substack{\tilde{f}\in A^2(\{\Psi<0\},e^{-\varphi_0})\\ (\tilde{f}-f)_o\in J}}\frac{|\xi\cdot \tilde{f}|^2}{\int_{\{\Psi<0\}}|\tilde{f}|^2e^{-\varphi_0}}\\
		&= \sup_{\substack{\tilde{f}\in A^2(\{\Psi<0\},e^{-\varphi_0})\\ (\tilde{f}-f)_o\in J}}\frac{|\xi\cdot f|^2}{\int_{\{\Psi<0\}}|\tilde{f}|^2e^{-\varphi_0}}.
	\end{split}
	\end{flalign*}
	Thus we get that
	\begin{flalign*}
		\begin{split}
			&\sup_{\substack{\xi\in A^2(\{\Psi<0\},e^{-\varphi_0})^*\setminus\{0\}\\ \xi|_{A^2(\{\Psi<0\},e^{-\varphi_0})\cap J}\equiv 0}}\frac{|\xi\cdot f|^2}{K^{\varphi_0}_{\xi,\Psi,\lambda}(0)}\\
			\leq&\inf_{\substack{\tilde{f}\in A^2(\{\Psi<0\},e^{-\varphi_0})\\ (\tilde{f}-f)_o\in J}}\int_{\{\Psi<0\}}|\tilde{f}|^2e^{-\varphi_0}\\
			=&C(\Psi,\varphi_0,J,f).
			\end{split}
	\end{flalign*}
	
	Since $A^2(\{\Psi<0\},e^{-\varphi_0})$ is a Hilbert space, and $A^2(\{\Psi<0\},e^{-\varphi_0})\cap J$ is a closed proper subspace of $A^2(\{\Psi<0\},e^{-\varphi_0})$ (using Lemma \ref{Jclosed}), there exists a closed subspace $H$ of $A^2(\{\Psi<0\},e^{-\varphi_0})$ such that $H=(A^2(\{\Psi<0\},e^{-\varphi_0})\cap J)^{\bot}\neq\{0\}$. Then for $f\in A^2(\{\Psi<0\},e^{-\varphi_0})$, we can make the decomposition $f=f_J+f_H$, such that $f_J\in A^2(\{\Psi<0\},e^{-\varphi_0})\cap J$, and $f_H\in H$. Note that the linear functional $\xi_f$ defined as follows:
	\[\xi_f\cdot g:=\int_{\{\Psi<0\}}g\overline{f_H}e^{-\varphi_0}, \ \forall g\in A^2(\{\Psi<0\},e^{-\varphi_0}),\]
	satisfies that $\xi_f\in A^2(\{\Psi<0\},e^{-\varphi_0})^*\setminus\{0\}$ and $\xi_f|_{A^2(\{\Psi<0\},e^{-\varphi_0})\cap J}\equiv 0$. Then we have
	\[\sup_{\substack{\xi\in A^2(\{\Psi<0\},e^{-\varphi_0})^*\setminus\{0\}\\ \xi|_{A^2(\{\Psi<0\},e^{-\varphi_0})\cap J}\equiv 0}}\frac{|\xi\cdot f|^2}{K^{\varphi_0}_{\xi,\Psi,\lambda}(0)}\\
	\geq\frac{|\xi_f\cdot f|^2}{K^{\varphi_0}_{\xi_f,\Psi,\lambda}(0)}.\]
	Besides, we can know that
	\[K^{\varphi_0}_{\xi_f,\Psi,\lambda}(0)=\sup_{h\in A^2(\{\Psi<0\},e^{-\varphi_0})}\frac{|\int_{\{\Psi<0\}} h\overline{f_H}e^{-\varphi_0}|^2}{\int_{\{\Psi<0\}}|h|^2e^{-\varphi_0}}\leq\int_{\{\Psi<0\}}|f_H|^2e^{-\varphi_0},\]
	and
	\[\xi_f\cdot f=\xi_f\cdot (f_J+f_H)=\xi_f\cdot f_H=\int_{\{\Psi<0\}}|f_H|^2e^{-\varphi_0}.\]
	Then we have
	\[\frac{|\xi_f\cdot f|^2}{K^{\varphi_0}_{\xi_f,\Psi,\lambda}(0)}\geq\int_{\{\Psi<0\}}|f_H|^2e^{-\varphi_0}\geq C(\Psi,\varphi_0,J,f),\]
	which implies that
	\[\sup_{\substack{\xi\in A^2(\{\Psi<0\},e^{-\varphi_0})^*\setminus\{0\}\\ \xi|_{A^2(\{\Psi<0\},e^{-\varphi_0})\cap J}\equiv 0}}\frac{|\xi\cdot f|^2}{K^{\varphi_0}_{\xi,\Psi,\lambda}(0)}\\
	\geq C(\Psi,\varphi_0,J,f).\]

	Lemma \ref{B=C} is proved.
\end{proof}

\section{Proof of Theorem \ref{concavity}}

We prove Theorem \ref{concavity} by using Lemma \ref{L2ext} (optimal $L^2$ extension theorem).

\begin{proof}[Proof of Theorem \ref{concavity}]
	Denote that $\Omega:=\{\Psi<0\}\times E=\{\Psi<0\}\times\{w\in \mathbb{C}: \text{Re\ }w\geq 0\}$, and the coordinate on $\Omega$ is $(z,w)$, where $z\in \{\Psi<0\}\subset \mathbb{C}^n$, $w\in E=\{w\in \mathbb{C}: \text{Re\ }w\geq 0\}$. Note that $D\setminus\{F=0\}$ is a pseudoconvex domain in $\mathbb{C}^n$, and $\{\Psi<0\}=\{\psi+2\log|1/F|<0\}$ on $D\setminus\{F=0\}$. Then $\{\Psi<0\}$ is a pseudoconvex domain in $\mathbb{C}^n$, and $\Psi$ is a plurisubharmonic function on $\{\Psi<0\}$. We get that $\Omega$ is a pseudoconvex domain in $\mathbb{C}^{n+1}$. For any $(z,w)\in \Omega$, let
	\[\tilde{\Psi}(z,w):=\varphi_0(z)+\Psi_{\lambda,\text{Re\ }w}=\varphi_0(z)+\lambda\max\{\Psi(z)+\text{Re\ }w,0\}.\]
	Then $\tilde{\Psi}$ is a plurisubharmonic function on $\Omega$.
	
	Denote that
	\[K(w):=K^{\varphi_0}_{\xi,\Psi,\lambda}(\text{Re\ }w)\]
	for any $w\in E$. We prove that $\log K(w)$ is a subharmonic function with respect to $w\in E$.
	
	Firstly we prove that $\log K(w)$ is upper semicontinuous. Let $w_j\in E$ such that $\lim_{j\rightarrow+\infty}w_j=w_0\in E$. We assume that $\{w_{k_j}\}$ is the subsequence of $\{w_j\}$ such that
	\[\lim_{j\rightarrow+\infty}K(w_{k_j})=\limsup_{j\rightarrow+\infty}K(w_j).\]
	By Lemma \ref{sup=max}, there exists a sequence of holomorphic functions $\{f_j\}$ on $\{\Psi<0\}$ such that $f_j\in A^2(\{\Psi<0\},e^{-\varphi_0})$, $\|f_j\|_{\lambda,\text{Re\ }w_j}=1$, and $|\xi\cdot f_j|^2=K(w_j)$, for any $j\in\mathbb{N}_+$. Since $\{w_j\}$ is bounded in $\mathbb{C}$, there exists some $s_0>0$, such that $\text{Re\ }w_j<s_0$ for any $j$, which implies that
	\[\int_{\{\Psi<0\}}|f_j|^2e^{-\varphi_0}\leq e^{\lambda s_0}\|f_j\|^2_{\lambda,\text{Re\ }w_j}=e^{\lambda s_0}, \ \forall j\in\mathbb{N}_+.\]
	Then following from Montel's theorem, we can get a subsequence of $\{f_{k_j}\}$ (denoted by $\{f_{k_j}\}$ itself) compactly convergent to a holomorphic function $f_0$ on $\{\Psi<0\}$. According to Fatou's lemma, we have
	\begin{flalign*}
		\begin{split}
			\|f_0\|_{\lambda,\text{Re\ }w_0}&=\int_{\{\Psi<0\}}|f_0(z)|^2e^{-\varphi_0(z)-\lambda\max\{\Psi(z)+\text{Re\ }w_0,0\}}\\
			&=\int_{\{\Psi<0\}}\lim_{j\rightarrow+\infty}|f_{k_j}(z)|^2e^{-\varphi_0(z)-\lambda\max\{\Psi(z)+\text{Re\ }w_{k_j},0\}}\\
			&\leq\liminf_{j\rightarrow+\infty}\int_{\{\Psi<0\}}|f_{k_j}(z)|^2e^{-\varphi_0(z)-\lambda\max\{\Psi(z)+\text{Re\ }w_{k_j},0\}}\\
			&=\liminf_{j\rightarrow+\infty}\|f_{k_j}\|_{\lambda,\text{Re\ }w_j}=1.
		\end{split}
	\end{flalign*}
	Then $\int_{\{\Psi<0\}}|f_0|^2e^{-\varphi_0}\leq e^{\lambda\text{Re\ }w_0} \|f_0\|^2_{\lambda,\text{Re\ }w_0}\leq e^{\lambda s_0}<+\infty$, which implies that $f_0\in A^2(\{\Psi<0\},e^{-\varphi_0})$. Lemma \ref{fjtof0} shows that $|\xi\cdot f_0|^2=\lim_{j\rightarrow+\infty}|\xi\cdot f_{k_j}|^2=\limsup_{j\rightarrow+\infty}K(w_j)$. Thus
	\[K(w_0)\geq\frac{|\xi\cdot f_0|^2}{\|f_0\|^2_{\lambda,\text{Re\ }w_0}}\geq\limsup_{j\rightarrow+\infty}K(w_j),\]
	which means
	\[\log K(w_0)\geq \limsup_{j\rightarrow+\infty}\log K(w_j).\]
	Then we get that $\log K(w)$ is upper semicontinuous with respect to $w\in E$.
	
	Secondly we prove that $\log K(w)$ satisfies the sub-mean value inequality.
	
	Let $\Delta(w_0,r)\subset E$ be the disc centered at $w_0$ with radius $r$, and let $\Omega':=\{\Psi<0\}\times\Delta(w_0,r)\subset\mathbb{C}^{n+1}$. Let $f_0\in A^2(\{\Psi<0\},e^{-\varphi_0})$ such that
	\[K(w_0)=\frac{|\xi\cdot f_0|^2}{\|f_0\|^2_{\lambda,\text{Re\ }w_0}}\]
	by Lemma \ref{sup=max}.
	
	Note that  $\Omega'$ is a pseudoconvex domain in $\mathbb{C}^{n+1}$, and $\tilde{\Psi}(z,w)=\varphi_0(z)+\Psi_{\lambda,\text{Re\ }w}=\varphi_0(z)+\lambda\max\{\Psi(z)+\text{Re\ }w,0\}$ is a plurisubharmonic function on $\Omega'$. Using Lemma \ref{L2ext} (optimal $L^2$ extension theorem), we can get a holomorphic function $\tilde{f}$ on $\Omega'$ such that $\tilde{f}(z,w_0)=f_0(z)$ for any $z\in\{\Psi<0\}$, and
	\begin{equation}\label{ineqL2ext}
		\frac{1}{\pi r^2}\int_{\Omega'}|\tilde{f}(z,w)|^2e^{-\tilde{\Psi}(z,w)}\leq \int_{\{\Psi<0\}}|f_0(z)|^2e^{-\Psi(z,w_0)}.
	\end{equation}
	Denote that $\tilde{f}_w(z)=\tilde{f}(z,w)=\tilde{f}|_{\{\Psi<0\}\times\{w\}}$. Since the function $y=\log x$ is concave, according to Jensen's inequality and inequality (\ref{ineqL2ext}), we have
	\begin{flalign}\label{ineqJensen}
		\begin{split}
			\log\|f_0\|^2_{\lambda,\text{Re\ }w_0}&=\log\left(\int_{\{\Psi<0\}}|f_0(z)|^2e^{-\Psi(z,w_0)}\right)\\
			&\geq\log\left(\frac{1}{\pi r^2}\int_{\Omega'}|\tilde{f}(z,w)|^2e^{-\tilde{\Psi}(z,w)}\right)\\
			&=\log\left(\frac{1}{\pi r^2}\int_{\Delta(w_0,r)}\int_{\{\Psi<0\}\times\{w\}}|\tilde{f}_w(z)|^2e^{-\tilde{\Psi}(z,w)}\right)\\
			&\geq\frac{1}{\pi r^2}\int_{\Delta(w_0,r)}\log\left(\|\tilde{f}_w\|^2_{\lambda,\text{Re\ }w}\right)\\
			&\geq\frac{1}{\pi r^2}\int_{\Delta(w_0,r)}\left(\log|\xi\cdot \tilde{f}_w|^2-\log K(w)\right).
		\end{split}
	\end{flalign}
	It follows from Lemma \ref{xihol} that $\xi\cdot \tilde{f}_w$ is holomorphic with respect to $w$, which implies that $\log|\xi\cdot\tilde{f}_w|^2$ is subharmonic with respect to $w$. Combining with $\tilde{f}_{w_0}=f_0$, we have
	\[\log|\xi\cdot f_0|^2\leq\frac{1}{\pi r^2}\int_{\Delta(w_0,r)}\log|\xi\cdot \tilde{f}_w|^2.\]
	Combining with inequality (\ref{ineqJensen}), we get
	\[\log\|f_0\|^2_{\lambda,\text{Re\ }w_0}\geq\log|\xi\cdot f_0|^2-\frac{1}{\pi r^2}\int_{\Delta(w_0,r)}\log K(w),\]
	which means
	\[\log K(w_0)\leq\frac{1}{\pi r^2}\int_{\Delta(w_0,r)}\log K(w).\]
	
	Since $\log K(w)$ is upper semicontinuous and satisfies the sub-mean value inequality, we know that $\log K(w)$ is a subharmonic function on the interior of $E$. In addition, since $\log K(w)$ is upper semicontinuous near $\{0\}+\sqrt{-1}\mathbb{R}$, and $\log K(w)$ is only dependent on the real part of $w$, we know that $\log K(w)$ is a subharmonic function on $E$.
\end{proof}

\section{Proofs of Theorem \ref{increasing} and Remark \ref{notconstant}}
In this section, we give the proofs of Theorem \ref{increasing} and Remark \ref{notconstant}. We need the following lemma.

\begin{Lemma}[see \cite{Demaillybook}]\label{Re}
	Let $D=I+\sqrt{-1}\mathbb{R}:=\{z=x+\sqrt{-1}y\in\mathbb{C} : x\in I, y\in\mathbb{R}\}$ be a subset of $\mathbb{C}$, where $I$ is an interval in $\mathbb{R}$. Let $\phi(z)$ be a subharmonic function on $D$ which is only dependent on $x=\text{Re\ }z$. Then $\phi(x):=\phi(x+\sqrt{-1}\mathbb{R})$ is a convex function with respect to $x\in I$.
\end{Lemma}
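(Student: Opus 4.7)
The plan is to reduce to the smooth case by standard mollification and then to read off convexity from the sign of the second derivative.

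First, let $\rho_\epsilon(z) = \epsilon^{-2}\rho(z/\epsilon)$ be a radially symmetric mollifier on $\mathbb{C}$ (smooth, nonnegative, supported in $\{|z|\leq 1\}$, and integrating to $1$) and set $\phi_\epsilon := \phi * \rho_\epsilon$ on $D_\epsilon := \{z\in D : \text{dist}(z,\partial D)>\epsilon\}$. Then $\phi_\epsilon$ is smooth and subharmonic on $D_\epsilon$, and $\phi_\epsilon\downarrow\phi$ pointwise as $\epsilon\downarrow 0$. Crucially, because $\phi(x+\sqrt{-1}y)=\phi_0(x)$ depends only on $x$ and $\rho_\epsilon$ is radial, Fubini shows that $\phi_\epsilon$ depends only on $x$ as well; write $\phi_\epsilon(x+\sqrt{-1}y)=\phi_{0,\epsilon}(x)$ for $x\in I_\epsilon:=\{x\in I:\text{dist}(x,\partial I)>\epsilon\}$.

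Second, in the smooth setting subharmonicity gives
\[0\leq\Delta\phi_\epsilon = \partial_x^2\phi_\epsilon + \partial_y^2\phi_\epsilon = \phi_{0,\epsilon}''(x),\]
since $\partial_y\phi_\epsilon\equiv 0$. Hence $\phi_{0,\epsilon}$ is convex on $I_\epsilon$. For any $x_0,x_1\in I$ and $t\in[0,1]$, once $\epsilon$ is small enough all three of $x_0,x_1,(1-t)x_0+tx_1$ lie in $I_\epsilon$, so
\[\phi_{0,\epsilon}((1-t)x_0+tx_1)\leq (1-t)\phi_{0,\epsilon}(x_0)+t\phi_{0,\epsilon}(x_1).\]
Letting $\epsilon\downarrow 0$ and using $\phi_{0,\epsilon}\downarrow\phi_0$ at every point (a standard property of subharmonic mollifications, the limit being allowed to equal $-\infty$) yields the convex-combination inequality for $\phi_0$, i.e., $\phi_0$ is convex on $I$.

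There is no substantive obstacle here. The only point requiring brief care is the inheritance of $y$-independence by $\phi_\epsilon$, which follows at once from Fubini combined with the radial symmetry of $\rho_\epsilon$; the remaining ingredients (smoothness of $\phi_\epsilon$, monotone convergence $\phi_\epsilon\downarrow\phi$, and the one-variable fact that nonnegativity of the second derivative implies convexity in the smooth case) are entirely classical.
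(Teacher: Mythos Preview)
Your argument is correct. The paper does not supply its own proof of this lemma; it simply records the statement with a reference to Demailly's book, so there is nothing to compare your approach against. Your mollification-and-limit argument is the standard way to establish this fact, and each step (subharmonicity and $y$-independence of $\phi_\epsilon$, nonnegativity of $\phi_{0,\epsilon}''$ from $\Delta\phi_\epsilon\geq 0$, and the passage to the limit via $\phi_\epsilon\downarrow\phi$) is valid. One tiny remark: the $y$-independence of $\phi_\epsilon$ does not actually require radial symmetry of $\rho_\epsilon$ (it follows immediately from the $y$-independence of $\phi$); radial symmetry is what guarantees the monotone convergence $\phi_\epsilon\downarrow\phi$.
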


\begin{proof}[Proof of Theorem \ref{increasing}]
	It follows from Theorem \ref{concavity} that $\log K^{\varphi_0}_{\xi,\Psi,\lambda}(\text{Re\ } w)$ is subharmonic with respect to $w\in [0,+\infty)+\sqrt{-1}\mathbb{R}$. Note that $\log K^{\varphi_0}_{\xi,\Psi,\lambda}(\text{Re\ } w)$ is only dependent on $\text{Re\ }w$, then following from Lemma \ref{Re}, we get that $\log K^{\varphi_0}_{\xi,\Psi,\lambda}(t)=\log K^{\varphi_0}_{\xi,\Psi,\lambda}(t+\sqrt{-1}\mathbb{R})$ is convex with respect to $t\in [0,+\infty)$, which implies that $-\log K^{\varphi_0}_{\xi,\Psi,\lambda}(t)+t$ is concave with respect to $t\in [0,+\infty)$. Then for any $\xi\in A^2(\{\Psi<0\},e^{-\varphi_0})^*$ with $\xi|_{A^2(\{\Psi<0\},e^{-\varphi_0})\cap J }\equiv 0$, to prove that $\log -K^{\varphi_0}_{\xi,\Psi,\lambda}(t)+t$ is increasing, we only need to prove that $\log -K^{\varphi_0}_{\xi,\Psi,\lambda}(t)+t$ has a lower bound on $[0,+\infty)$.
	
	Using Lemma \ref{sup=max}, we obtain that there exists $f_t\in A^2(\{\Psi<0\},e^{-\varphi_0})$ for any $t\in [0,+\infty)$, such that $\xi\cdot f_t=1$ and
	\begin{equation}\label{K=ft}
		K^{\varphi_0}_{\xi,\Psi,\lambda}(t)=\frac{1}{\|f_t\|_{\lambda,t}^2}.
	\end{equation}
	In addition, according to Lemma \ref{L2mthod}, there exists a holomorphic function $\tilde{F}$ on $D$ such that
	\begin{equation}\label{tildeF}
		\int_D|\tilde{F}-(1-b_{t}(\Psi))f_tF^2|^2e^{-\varphi+v_{t}(\Psi)-\Psi}\leq C\int_D\mathbb{I}_{\{-t-1<\Psi<-t\}}|f_t|^2e^{-\varphi_0-\Psi},
	\end{equation}
	where
	\[\varphi=\varphi_0+2\max\{\psi,2\log|F|\},\]
	and $C$ is a positive constant. Then it follows from inequality (\ref{tildeF}) that
	\begin{flalign}\label{tildeF2}
		\begin{split}
			&\int_{\{\Psi<0\}}|\tilde{F}-(1-b_{t}(\Psi))f_tF^2|^2e^{-\varphi+v_{t}(\Psi)-\Psi}\\
			\leq&\int_D|\tilde{F}-(1-b_{t}(\Psi))f_tF^2|^2e^{-\varphi+v_{t}(\Psi)-\Psi}\\
			\leq&C\int_D\mathbb{I}_{\{-t-1<\Psi<-t\}}|f_t|^2e^{-\varphi_0-\Psi}\\
			\leq&Ce^{t+1}\int_{\{\Psi<-t\}}|f_t|^2e^{-\varphi_0}.
		\end{split}
	\end{flalign}
	Denote that $\tilde{F}_t:=\tilde{F}/F^2$ on $\{\Psi<0\}$, then $\tilde{F}_t$ is a holomorphic function on $\{\Psi<0\}$. Note that $|F|^4e^{-\varphi}=e^{-\varphi_0}$ on $\{\Psi<0\}$. Then inequality (\ref{tildeF2}) implies that
	\begin{equation}\label{tildeFt}
	\int_{\{\Psi<0\}}|\tilde{F}_t-(1-b_{t}(\Psi))f_t|^2e^{-\varphi_0+v_{t}(\Psi)-\Psi}\leq Ce^{t+1}\int_{\{\Psi<-t\}}|f_t|^2e^{-\varphi_0}<+\infty.
\end{equation}
According to inequality (\ref{tildeFt}), we can get that $(\tilde{F}_t-f_t)_o\in I(\Psi+\varphi_0)\subset J$, which means that $\xi\cdot \tilde{F}_t=\xi\cdot f_t=1$. Besides, since $v_t(\Psi)\geq \Psi$, we have
\begin{flalign*}
	\begin{split}
		&\left(\int_{\{\Psi<0\}}|\tilde{F}_t-(1-b_{t}(\Psi))f_t|^2e^{-\varphi_0+v_{t}(\Psi)-\Psi}\right)^{1/2}\\
		\geq&\left(\int_{\{\Psi<0\}}|\tilde{F}_t-(1-b_{t}(\Psi))f_t|^2e^{-\varphi_0}\right)^{1/2}\\
		\geq&\left(\int_{\{\Psi<0\}}|\tilde{F}_t|^2e^{-\varphi_0}\right)^{1/2}-\left(\int_{\{\Psi<0\}}|(1-b_{t}(\Psi))f_t|^2e^{-\varphi_0}\right)^{1/2}\\
		\geq&\left(\int_{\{\Psi<0\}}|\tilde{F}_t|^2e^{-\varphi_0}\right)^{1/2}-\left(\int_{\{\Psi<-t\}}|f_t|^2e^{-\varphi_0}\right)^{1/2}.
	\end{split}
\end{flalign*}
Combining with inequality (\ref{tildeFt}), we have
\begin{flalign*}
	\begin{split}
		&\int_{\{\Psi<0\}}|\tilde{F}_t|^2e^{-\varphi_0}\\
		\leq&2\int_{\{\Psi<0\}}|\tilde{F}_t-(1-b_{t}(\Psi))f_t|^2e^{-\varphi_0+v_{t}(\Psi)-\Psi}+2\int_{\{\Psi<-t\}}|f_t|^2e^{-\varphi_0}\\
		\leq&2(Ce^{t+1}+1)\int_{\{\Psi<-t\}}|f_t|^2e^{-\varphi_0}.
	\end{split}
\end{flalign*}
Note that
	\begin{flalign*}
	\begin{split}
		\|f_t\|^2_{\lambda,t}=&\int_{\{\Psi<0\}}|f_t|^2e^{-\varphi_0-\Psi_{\lambda,t}}\\
		=&\int_{\{\Psi<-t\}}|f_t|^2e^{-\varphi_0}+\int_{\{0>\Psi\geq -t\}}|f_t|^2e^{-\varphi_0-\lambda(\Psi+t)}\\
		\geq&\int_{\{\Psi<-t\}}|f_t|^2e^{-\varphi_0}.
	\end{split}
\end{flalign*}
Then we have
\[\int_{\{\Psi<0\}}|\tilde{F}_t|^2e^{-\varphi_0}\leq 2(Ce^{t+1}+1)\|f_t\|_{\lambda,t}^2=C_1\frac{e^t}{K^{\varphi_0}_{\xi,\Psi,\lambda}(t)},\]
where $C_1:=2(eC+1)$ is a positive constant. In addition, $\xi\cdot \tilde{F}_t=1$ implies that
\[\int_{\{\Psi<0\}}|\tilde{F}_t|^2e^{-\varphi_0}=\|\tilde{F}_t\|^2_{\lambda,0}\geq (K^{\varphi_0}_{\xi,\Psi,\lambda}(0))^{-1}.\]
Then we get that
\[-\log K^{\varphi_0}_{\xi,\Psi,\lambda}(t)+t\geq C_2,\ \forall t\in [0,+\infty),\]
where $C_2:=\log (C_1^{-1}K^{\varphi_0}_{\xi,\Psi,\lambda}(0))$ is a finite constant. Since $-\log K^{\varphi_0}_{\xi,\Psi,\lambda}(t)+t$ is concave, we get that $-\log K^{\varphi_0}_{\xi,\Psi,\lambda}(t)+t$ is increasing with respect to $t\in [0,+\infty)$.
\end{proof}

In the following we give the proof of Remark \ref{notconstant}.

\begin{proof}[Proof of Remark \ref{notconstant}]
	Denote that $K(t):=K^{\varphi_0}_{\xi,\Psi,\lambda}(t)$ for any $t\in [0,+\infty)$. According to Theorem \ref{concavity} and Lemma \ref{Re}, we can know that $\log K(t) -kt$ is convex on $[0,+\infty)$. Combining with that $e^{-kt} K(t)$ is increasing and not a constant function on $[0,T]$, which implies that $\log K(t)-kt=\log(e^{-kt} K(t))$ is increasing and not a constant function on $[0,T]$, we have that $\log K(t)-kt$ is strictly increasing on $[T,+\infty)$. Then $e^{-kt} K^{\varphi_0}_{\xi,\Psi,\lambda}(t)=\exp(\log K(t)-kt)$ is strictly increasing on $[T,+\infty)$.
\end{proof}

\section{Proof of Corollary \ref{J-M+f}}
In this section, we give the proof of Corollary \ref{J-M+f}.

\begin{proof}[Proof of Corollary \ref{J-M+f}]
For any $p\in (1,2)$, $\lambda>0$, let $\xi\in A^2(\{\Psi_1<0\})^*\setminus\{0\}$, such that $\xi|_{A^2(\{\Psi_1<0\})\cap J_p}\equiv 0$, where $J_p:=I(p\Psi_1)_o$. Denote that
\[K_{\xi,p,\lambda}(t):=\sup_{\tilde{f}\in A^2(\{\Psi_1<0\})}\frac{|\xi\cdot \tilde{f}|^2}{\|\tilde{f}\|^2_{p,\lambda,t}},\]
where
\[\|\tilde{f}\|_{p,\lambda,t}:=\left(\int_{\{\Psi_1<0\}}|\tilde{f}|^2e^{-\lambda\max\{p\Psi_1+t,0\}}\right)^{1/2},\]
and $t\in [0,+\infty)$. Note that
\[p\Psi_1=\min\{(2pc_o^{fF}(\psi)\psi+(4-2p)\log|F|)-2\log|F^2|,0\}\]
and Lemma \ref{fonotinI+} shows $f_o\notin J_p$, which implies that $A^2(\{\Psi_1<0\})\cap J_p$ is a proper subspace of $A^2(\{\Psi_1<0\})$, and $K_{\xi,p,\lambda}(0)\in (0,+\infty)$. Theorem \ref{increasing} tells us that $-\log K_{\xi,p,\lambda}(t)+t$ is increasing with respect to $t\in [0,+\infty)$, which implies that
\begin{equation}\label{-logK(t)+t}
	-\log K_{\xi,p,\lambda}(t)+t\geq -\log K_{\xi,p,\lambda}(0), \ \forall t\in [0,+\infty).
\end{equation}

Since $f\in A^2(\{\Psi_1<0\})$, following from inequality (\ref{-logK(t)+t}), we get that
\[\|f\|^2_{p,\lambda,t}\geq\frac{|\xi\cdot f|^2}{K_{\xi,p,\lambda}(t)}\geq e^{-t}\frac{|\xi\cdot f|^2}{K_{\xi,p,\lambda}(0)}, \ \forall t\in [0,+\infty).\]
In addition, since $f_o\notin J_p$, according to Lemma \ref{B=C}, we have
\begin{flalign}\label{f>e^-tC}
	\begin{split}
	\|f\|^2_{p,\lambda,t}&\geq\sup_{\substack{\xi\in A^2(\{\Psi_1<0\})^*\setminus\{0\}\\ \xi|_{A^2(\{\Psi_1<0\})\cap J_p}\equiv 0}}e^{-t}\frac{|\xi\cdot f|^2}{K_{\xi,p,\lambda}(0)}\\
	&=e^{-t}C(p\Psi_1,0,J_p,f), \ \forall t\in [0,+\infty).
\end{split}
\end{flalign}
Note that for any $t\in [0,+\infty)$,
\begin{equation}\label{f2}
	\|f\|^2_{p,\lambda,t}=\int_{\{p\Psi_1<-t\}}|f|^2+\int_{\{0>p\Psi_1\geq -t\}}|f|^2e^{-\lambda(p\Psi_1+t)}.
\end{equation}
Since for any $\lambda>0$,
\[\int_{\{0>p\Psi_1\geq t\}}|f|^2e^{-\lambda(p\Psi_1+t)}\leq\int_{\{0>p\Psi_1\geq -t\}}|f|^2<+\infty,\]
and $\lim_{\lambda\rightarrow+\infty}e^{-\lambda(p\Psi_1+t)}=0$ on $\{0>p\Psi_1\geq -t\}$, according to Lebesgue's dominated convergence theorem, we have
\[\lim_{\lambda\rightarrow+\infty}\int_{\{0>p\Psi_1\geq -t\}}|f|^2e^{-\lambda(p\Psi_1+t)}=0.\]
Then equality (\ref{f2}) implies
\begin{equation}
	\lim_{\lambda\rightarrow+\infty}\|f\|^2_{p,\lambda,t}=\int_{\{p\Psi_1<-t\}}|f|^2, \ \forall t\in [0,+\infty).
\end{equation}
Letting $\lambda\rightarrow+\infty$ in inequality (\ref{f>e^-tC}), we get that for any $t\in [0,+\infty)$,
\begin{equation}\label{f>e^-tC2}
	\int_{\{p\Psi_1<-t\}}|f|^2\geq e^{-t}C(p\Psi_1,0,J_p,f).
\end{equation}

Note that $\{p\Psi_1<0\}=\{\Psi_1<0\}$ and $J_p\subset I_+(\Psi_1)_o$ for any $p\in (1,2)$. Then we have
\[C(p\Psi_1,0,J_p,f)\geq C(\Psi_1,0,I_+(\Psi_1)_o,f), \ \forall p\in (1,2).\]
Since $\int_{\{\Psi_1<0\}}|f|^2<+\infty$, it follows from Lebesgue's dominated convergence theorem and inequality (\ref{f>e^-tC2}) that
\begin{flalign}
	\begin{split}
		&\int_{\{\Psi_1<-t\}}|f|^2\\
		=&\lim_{p\rightarrow 1+0}\int_{\{p\Psi_1<-t\}}|f|^2\\
		\geq&\limsup_{p\rightarrow 1+0}e^{-t}C(p\Psi_1,0,J_p,f)\\
		\geq&e^{-t}C(\Psi_1,0,I_+(\Psi_1)_o,f), \ \forall t\in [0,+\infty).
	\end{split}
\end{flalign}
Let $r=e^{-t/2}$, and we get that
\begin{equation}
	\frac{1}{r^2}\int_{\{\Psi_1<2\log r\}}|f|^2\geq C(\Psi_1,0,I_+(\Psi_1)_o,f), \ \forall r\in (0,1].
\end{equation}
Note that $C(\Psi_1,0,I_+(\Psi_1)_o,f)=G(0;\Psi_1,I_+(\Psi_1)_o,f)>0,$
thus Corollary \ref{J-M+f} holds.
\end{proof}

\section{Proof of Corollory \ref{SOPE}}

In this section, we give the proof of Corollary \ref{SOPE}.

\begin{proof}[Proof of Corollary \ref{SOPE}]
Let $\Psi_q:=q\Psi$ for any $q>2a_o^f(\Psi;\varphi_0)\geq 1$. Note that
\[q\Psi=\min\{2q\psi+(2\lceil q \rceil-2q)\log|F|-2\log|F^{\lceil q \rceil}|,0\},\]
where $\lceil q \rceil=\min\{m\in\mathbb{Z} : m\geq q\}$. By the definition of $a_o^f(\Psi;\varphi_0)$, we have $f_o\notin I(2q\Psi+\varphi_0)_o$ for any $q>2a_o^f(\Psi;\varphi_0)$. For any fixed $q>2a_o^f(\Psi;\varphi_0)$, $\lambda>0$, let $\xi\in A^2(\{\Psi<0\},e^{-\varphi_0})^*\setminus\{0\}$, such that $\xi|_{A^2(\{\Psi<0\},e^{-\varphi_0})\cap J_q}\equiv 0$, where $J_q:=I(q\Psi+\varphi_0)_o$. Denote that
\[K_{\xi,q,\lambda}(t):=\sup_{\tilde{f}\in A^2(\{\Psi<0\},e^{-\varphi_0})}\frac{|\xi\cdot \tilde{f}|^2}{\|\tilde{f}\|^2_{q,\lambda,t}},\]
where
\[\|\tilde{f}\|_{q,\lambda,t}:=\left(\int_{\{\Psi<0\}}|\tilde{f}|^2e^{-\varphi_0-\lambda\max\{q\Psi+t,0\}}\right)^{1/2},\]
and $t\in [0,+\infty)$. Theorem \ref{increasing} tells us that $-\log K_{\xi,q,\lambda}(t)+t$ is increasing with respect to $t\in [0,+\infty)$, which implies that
\begin{equation}\label{-logK(t)+t:2}
	-\log K_{\xi,q,\lambda}(t)+t\geq -\log K_{\xi,q,\lambda}(0), \ \forall t\in [0,+\infty).
\end{equation}

Since $\int_{\{\Psi<0\}}|f|^2e^{-\varphi_0}\leq\int_{\{\Psi<0\}}|f|^2e^{-\varphi_0-\Psi}<+\infty$, following from inequality (\ref{-logK(t)+t:2}), we get that
\[\|f\|^2_{q,\lambda,t}\geq\frac{|\xi\cdot f|^2}{K_{\xi,q,\lambda}(t)}\geq e^{-t}\frac{|\xi\cdot f|^2}{K_{\xi,q,\lambda}(0)}, \ \forall t\in [0,+\infty).\]
According to Lemma \ref{B=C}, we have
\begin{flalign}\label{f>e^-tC:2}
	\begin{split}
		\|f\|^2_{q,\lambda,t}&\geq\sup_{\substack{\xi\in A^2(\{\Psi<0\},e^{-\varphi_0})^*\setminus\{0\}\\ \xi|_{A^2(\{\Psi<0\},e^{-\varphi_0})\cap J_q}\equiv 0}}e^{-t}\frac{|\xi\cdot f|^2}{K_{\xi,q,\lambda}(0)}\\
		&=e^{-t}C(q\Psi,\varphi_0,J_q,f), \ \forall t\in [0,+\infty).
	\end{split}
\end{flalign}
Note that for any $t\in [0,+\infty)$,
\begin{equation}\label{f2:2}
	\|f\|^2_{q,\lambda,t}=\int_{\{q\Psi<-t\}}|f|^2e^{-\varphi_0}+\int_{\{0>q\Psi\geq -t\}}|f|^2e^{-\varphi_0-\lambda(q\Psi+t)}.
\end{equation}
Since for any $\lambda>0$,
\[\int_{\{0>q\Psi\geq t\}}|f|^2e^{-\varphi_0-\lambda(q\Psi+t)}\leq\int_{\{0>q\Psi\geq -t\}}|f|^2e^{-\varphi_0}<+\infty,\]
and $\lim_{\lambda\rightarrow+\infty}e^{-\lambda(q\Psi+t)}=0$ on $\{0>q\Psi\geq -t\}$, according to Lebesgue's dominated convergence theorem, we have
\[\lim_{\lambda\rightarrow+\infty}\int_{\{0>q\Psi\geq -t\}}|f|^2e^{-\varphi_0-\lambda(q\Psi+t)}=0.\]
Then equality (\ref{f2:2}) implies
\begin{equation}
	\lim_{\lambda\rightarrow+\infty}\|f\|^2_{q,\lambda,t}=\int_{\{q\Psi<-t\}}|f|^2e^{-\varphi_0}, \ \forall t\in [0,+\infty).
\end{equation}
Thus letting $\lambda\rightarrow+\infty$ in inequality (\ref{f>e^-tC:2}), we get that for any $t\in [0,+\infty)$,
\begin{equation}\label{f>e^-tC2:2}
	\int_{\{q\Psi<-t\}}|f|^2e^{-\varphi_0}\geq e^{-t}C(q\Psi,\varphi_0,J_q,f)=e^{-t}C(\Psi,\varphi_0,J_q,f).
\end{equation}
Note that $J_q\subset I_+(2a_o^f(\Psi;\varphi_0)\Psi+\varphi_0)_o$ for any $q>2a_o^f(\Psi;\varphi_0)$. Then we have
\[C(\Psi,\varphi_0,J_q,f)\geq C(\Psi,\varphi_0,I_+(2a_o^f(\Psi;\varphi_0)\Psi+\varphi_0)_o,f), \ \forall q>2a_o^f(\Psi;\varphi_0).\]
Then it follows from inequality (\ref{f>e^-tC2:2}) that
\begin{equation}\label{f>e^-tC3:2}
	\int_{\{q\Psi<-t\}}|f|^2e^{-\varphi_0}\geq e^{-t}C(\Psi,\varphi_0,I_+(2a_o^f(\Psi;\varphi_0)\Psi+\varphi_0)_o,f)
\end{equation}
for any $q>2a_o^f(\Psi;\varphi_0)$ and $t\in [0,+\infty)$.

According to Fubini's theorem, we have
\begin{flalign*}
	\begin{split}
		&\int_{\{\Psi<0\}}|f|^2e^{-\varphi_0-\Psi}\\
		=&\int_{\{\Psi<0\}}\left(|f|^2e^{-\varphi_0}\int_0^{e^{-\Psi}}\mathrm{d}s\right)\\
		=&\int_0^{+\infty}\left(\int_{\{\Psi<0\}\cap\{s<e^{-\Psi}\}}|f|^2e^{-\varphi_0}\right)\mathrm{d}s\\
		=&\int_{-\infty}^{+\infty}\left(\int_{\{q\Psi<-qt\}\cap\{\Psi<0\}}|f|^2e^{-\varphi_0}\right)e^t\mathrm{d}t.
	\end{split}
\end{flalign*}
Inequality (\ref{f>e^-tC3:2}) implies that for any $q>2a_o^f(\Psi;\varphi_0)$,
\begin{flalign*}
	\begin{split}
		&\int_0^{+\infty}\left(\int_{\{q\Psi<-qt\}\cap\{\Psi<0\}}|f|^2e^{-\varphi_0}\right)e^t\mathrm{d}t\\
		\geq&\int_0^{+\infty}e^{-qt}C(\Psi,\varphi_0,I_+(2a_o^f(\Psi;\varphi_0)\Psi+\varphi_0)_o,f)\cdot e^t\mathrm{d}t\\
		=&\frac{1}{q-1}C(\Psi,\varphi_0,I_+(2a_o^f(\Psi;\varphi_0)\Psi+\varphi_0)_o,f),
	\end{split}
\end{flalign*}
and
\begin{flalign*}
	\begin{split}
		&\int_{-\infty}^0\left(\int_{\{q\Psi<-qt\}\cap\{\Psi<0\}}|f|^2e^{-\varphi_0}\right)e^t\mathrm{d}t\\
		\geq&\int_{-\infty}^0C(\Psi,\varphi_0,I_+(2a_o^f(\Psi;\varphi_0)\Psi+\varphi_0)_o,f)\cdot e^t\mathrm{d}t\\
		=&C(\Psi,\varphi_0,I_+(2a_o^f(\Psi;\varphi_0)\Psi+\varphi_0)_o,f).
	\end{split}
\end{flalign*}
Then we have
\begin{equation}\label{q/q-1}
	\int_{\{\Psi<0\}}|f|^2e^{-\varphi_0-\Psi}\geq \frac{q}{q-1}C(\Psi,\varphi_0,I_+(2a_o^f(\Psi;\varphi_0)\Psi+\varphi_0)_o,f).
\end{equation}
for any $q>2a_o^f(\Psi;\varphi_0)$. Let $q\rightarrow 2a_o^f(\Psi;\varphi_0)+0$, then inequality (\ref{q/q-1}) also holds for $q\geq 2a_o^f(\Psi;\varphi_0)$. Thus if $q>1$ satisfying
\begin{equation}
	\int_{\{\Psi<0\}}|f|^2e^{-\varphi_0-\Psi}< \frac{q}{q-1}C(\Psi,\varphi_0,I_+(2a_o^f(\Psi;\varphi_0)\Psi+\varphi_0)_o,f),
\end{equation}
we have $q<2a_o^f(\Psi;\varphi_0)$, which means that $f_o\in I(q\Psi+\varphi_0)_o$. Proof of Corollary \ref{SOPE} is done.
\end{proof}

\section{Appendix}
\
Let $D$ be a domain in $\mathbb{C}^n$, and $\varphi$ be a plurisubharmonic function on $D$. Denote that
\[\ell_0:=\{\eta=(\eta_{\alpha})_{\alpha\in\mathbb{N}^n} : \exists k\in\mathbb{N}, \text{such\ that\ } \eta_{\alpha}=0, \ \forall |\alpha|\geq k\},\]
where for any $\alpha=(\alpha_1,\ldots,\alpha_n)\in\mathbb{N}^n$, $|\alpha|:=\alpha_1+\cdots+\alpha_n$. Let $z_0\in D$, and $\eta=(\eta_{\alpha})\in\ell_0$. For any $f\in\mathcal{O}(D)$, denote that
\begin{equation}\label{xifunctional}
	\eta\cdot f:=\sum_{\alpha\in\mathbb{N}^n}\eta_{\alpha}\frac{f^{(\alpha)}(z_0)}{\alpha!}.
\end{equation}
It can be shown that for any $\eta\in \ell_0$, there is a finite constant $C_{\eta}>0$, such that
\[|\eta\cdot f|^2\leqslant C_{\eta}\int_D|f|^2e^{-\varphi},\]
for any $f\in A^2(D,e^{-\varphi})$ (see \cite{BG1,BG2}). Then any $\eta\in\ell_0$ can be seen as an element in $A^2(D,e^{-\varphi})^*$ by equality (\ref{xifunctional}). Note that $A^2(D,e^{-\varphi})$ is a Hilbert space. By Riesz representation theorem, there exists $g_{\eta}\in A^2(D,e^{-\varphi})$, such that
\[\eta\cdot f=\int_Df\overline{g_{\eta}}e^{-\varphi}, \ \forall f\in A^2(D,e^{-\varphi}),\]
which induces a map from $\ell_0$ to $A^2(D,e^{-\varphi})$. We denote the map by $T_{\varphi,z_0}$:
\begin{flalign*}
	\begin{split}
		T_{\varphi,z_0} \ : \ \ell_0&\longrightarrow A^2(D,e^{-\varphi})\\
		\eta&\longmapsto g_{\eta}.
	\end{split}
\end{flalign*}
We state the following lemma.

\begin{Lemma}\label{Tvarphi}
	The image of $T_{\varphi,z_0}$ is dense in $A^2(D,e^{-\varphi})$, i.e., $\overline{T_{\varphi,z_0}(\ell_0)}=A^2(D,e^{-\varphi})$, under the topology of $A^2(D,e^{-\varphi})$.
\end{Lemma}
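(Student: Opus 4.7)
The plan is to use the standard Hilbert-space characterization: a linear subspace of a Hilbert space is dense if and only if its orthogonal complement is trivial. So I would show that any $f\in A^2(D,e^{-\varphi})$ that is orthogonal (with respect to the inner product $\langle f,g\rangle=\int_D f\overline{g}\,e^{-\varphi}$) to every element of $T_{\varphi,z_0}(\ell_0)$ must vanish identically.

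Suppose $f\in A^2(D,e^{-\varphi})$ satisfies $\langle f,g_\eta\rangle=0$ for every $\eta\in\ell_0$. Unwinding the definition of $g_\eta$ via the Riesz representation, this is exactly
\[
\eta\cdot f=\int_D f\,\overline{g_\eta}\,e^{-\varphi}=0,\qquad \forall\,\eta\in\ell_0.
\]
For a fixed multi-index $\alpha_0\in\mathbb{N}^n$, choose $\eta=(\eta_\alpha)$ with $\eta_{\alpha_0}=1$ and $\eta_\alpha=0$ otherwise; this lies in $\ell_0$. Applying the identity from (\ref{xifunctional}), we get $f^{(\alpha_0)}(z_0)/\alpha_0!=0$. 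Since $\alpha_0$ was arbitrary, all Taylor coefficients of $f$ at $z_0$ vanish.

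Because $f$ is holomorphic on the connected open set $D$ and vanishes to infinite order at $z_0\in D$, the identity theorem forces $f\equiv 0$ on $D$. Hence $T_{\varphi,z_0}(\ell_0)^{\perp}=\{0\}$ in the Hilbert space $A^2(D,e^{-\varphi})$, which yields $\overline{T_{\varphi,z_0}(\ell_0)}=A^2(D,e^{-\varphi})$ as claimed.

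There is essentially no technical obstacle here: the only point requiring care is the justification that the single-multi-index sequences actually lie in $\ell_0$ (they do, trivially) and that the pairing formula (\ref{xifunctional}) indeed selects one Taylor coefficient at a time. The estimate $|\eta\cdot f|^2\le C_\eta\int_D|f|^2e^{-\varphi}$, recalled from \cite{BG1,BG2}, is what legitimizes the use of the Riesz representation theorem defining $T_{\varphi,z_0}$ in the first place, so nothing further is needed beyond what is already in place.
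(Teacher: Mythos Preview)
Your proof is correct and considerably shorter than the paper's. You use the orthogonal-complement criterion: if $f\perp T_{\varphi,z_0}(\ell_0)$, then testing against the single-multi-index elements of $\ell_0$ kills every Taylor coefficient of $f$ at $z_0$, and connectedness of the domain $D$ forces $f\equiv 0$. This is the natural two-line argument.

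The paper instead works constructively: it runs a Gram--Schmidt-type induction over multi-indices to produce an explicit orthogonal family $\{g_\alpha\}_{\alpha\in\mathbb{N}^n}\subset T_{\varphi,z_0}(\ell_0)$ with the property that $g_\alpha^{(\beta)}(o)=0$ for $\beta<\alpha$ and $g_\alpha^{(\alpha)}(o)=\|g_\alpha\|^2$, and then verifies that every $f\in A^2(D,e^{-\varphi})$ expands as $\sum_\alpha a_\alpha g_\alpha$. This yields more: an explicit complete orthogonal system inside the image of $T_{\varphi,z_0}$, essentially an orthogonalized Taylor basis adapted to the weight $e^{-\varphi}$. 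For the purpose at hand (feeding into Lemma~\ref{xihol}), that extra structure is not used, so your argument is the more economical route; the paper's approach would pay off only if one later needed the orthogonal system itself.
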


\begin{Remark}
	It follows from Lemma \ref{Tvarphi} that $\ell_0$ is dense in $A^2(D,e^{-\varphi})^*$, under the strong topology of $A^2(D,e^{-\varphi})^*$.
\end{Remark}

\begin{proof}[Proof of Lemma \ref{Tvarphi}]
	We introduce some notations before the proof.
	
	For $\alpha=(\alpha_1,\cdots,\alpha_n), \beta=(\beta_1,\cdots,\beta_n)\in\mathbb{N}^n$, denote that $\alpha<\beta$, if $|\alpha|<|\beta|$, or $|\alpha|=|\beta|$ but there exists $k$ with $1\leq k\leq n$, such that $\alpha_1=\beta_1,\ldots, \alpha_{k-1}=\beta_{k-1}$, $\alpha_k<\beta_k$.

	We may assume that $z_0=o\in D$ is the origin in $\mathbb{C}^n$, and denote $T_{\varphi,z_0}$ by $T$. We will choose a countable sequence $\{\eta[\alpha]\}$ of elements in $\ell_0$, such that
	\[\overline{\text{span}\{T(\eta[\alpha])\}}=A^2(D,e^{-\varphi}),\]
	which can imply Lemma \ref{Tvarphi}. For any $\alpha\in\mathbb{N}^n$, we set $\eta[\alpha]\in\ell_0$, with $\eta[\alpha]_{\gamma}=\gamma!\cdot b_{\gamma}^{\alpha}\in \mathbb{C}$ (which will be determined in the following discussions) for any $\gamma<\alpha$, $\eta[\alpha]_{\alpha}=\alpha!$, and $\eta[\alpha]_{\gamma}=0$ for any $\gamma>\alpha$. Denote that $g_{\alpha}:=g_{\eta_{\alpha}}=T(\eta[\alpha])\in A^2(D,e^{-\varphi})$. We will choose $b_{\gamma}^{\alpha}$ such that
		\begin{flalign}\label{gbeta}
		\begin{split}
			\int_{D}g_{\alpha}\overline{{g}_{\beta}}e^{-\varphi}&=0, \ \forall \alpha\neq\beta;\\
			g_{\beta}^{(\gamma)}(o)&=0, \ \forall \gamma<\beta;\\
			g_{\alpha}^{(\alpha)}(o)&=\int_{D}|g_{\alpha}|^2e^{-\varphi}, \ \forall \alpha.
		\end{split}
	\end{flalign}
	And for any $\alpha\in\mathbb{N}^n$, denote that $\alpha\in S_1$ if $g_{\alpha}\equiv 0$. Otherwise we denote that $\alpha\in S_2$.
		
	Firstly, for $\alpha=(0,\cdots,0)$, we set
	\[\eta((0,\cdots,0))=(1,0,\cdots,0,\cdots)\in\ell_0.\]
	Denote that
	\[T((1,0,\cdots,0,\cdots))=g_{(0,\cdots,0)}\in A^2(D,e^{-\varphi}),\]
	then for any $f\in A^2(D,e^{-\varphi_0})$,
	\begin{equation}\label{f(o)}
		f(o)=\int_D f\overline{{g}_{(0,\cdots,0)}}e^{-\varphi}.
	\end{equation}
	Let $f=g_{(0,\cdots,0)}$ in equality (\ref{f(o)}), we get
	\[g_{(0,\cdots,0)}(o)=\int_D|g_{(0,\cdots,0)}|^2e^{-\varphi}.\]
	
	For some $\alpha\in\mathbb{N}^n$, we assume that for any $\beta<\alpha$, $\eta(\beta)\in \ell_0$ (i.e. the complex number sequence $\{b_{\gamma}^{\beta}\}$) has been choosen to satisfy
		\begin{flalign}\label{gbeta<alpha}
		\begin{split}
			\int_{D}g_{\beta_1}\overline{{g}_{\beta_2}}e^{-\varphi}&=0, \ \forall \beta_1\neq\beta_1, \ \beta_1,\beta_2<\alpha;\\
			g_{\beta}^{(\gamma)}(o)&=0, \ \forall \gamma<\beta;\\
			g_{\beta}^{(\beta)}(o)&=\int_{D}|g_{\beta}|^2e^{-\varphi}, \ \forall \beta<\alpha.
		\end{split}
	\end{flalign}
	 By the choice of $\eta[\alpha]$, for any $f\in A^2(D,e^{-\varphi})$,
	\begin{equation}\label{equation5.1}
		\sum_{\gamma<\alpha}b_{\gamma}^{\alpha}f^{(\gamma)}(o)+f^{(\alpha)}(o)=\int_D f\overline{{g}_{\alpha}}e^{-\varphi}.
	\end{equation}
	Since we want
	\[\int_Dg_{\beta}\overline{g_{\alpha}}e^{-\varphi}=0, \ \forall \beta<\alpha,\]
	then there must be
	\[\sum_{\gamma<\alpha}b_{\gamma}^{\alpha}g_{\beta}^{(\gamma)}(o)+g_{\beta}^{(\alpha)}(o)=0,\  \forall \beta<\alpha,\]
	which is equivalent to
	\begin{equation}\label{lsystem}
		b_{\beta}^{\alpha}g_{\beta}^{(\beta)}(o)+\sum_{\beta<\gamma<\alpha}b_{\gamma}^{\alpha}g_{\beta}^{(\gamma)}(o)=-g_{\beta}^{(\alpha)}(o), \forall \beta<\alpha,
	\end{equation}
	by the choice of $\{g_{\beta}\}$. Equality (\ref{lsystem}) can be seen as a linear equations system for $(b_{\gamma}^{\alpha})_{\gamma<\alpha}$. Note that for $\beta\in S_1$, $g_{\beta}^{(\beta)}=0$. We set $b_{\beta}^{\alpha}=0$ for any $\beta<\alpha$ with $\beta\in S_1$. And we also note that
	\[\prod_{\beta<\alpha,\ \beta\in S_2}g_{\beta}^{(\beta)}(o)=\prod_{\beta<\alpha,\ \beta\in S_2}\int_D|g_{\beta}|^2e^{-\varphi}>0.\]
	It means that there exists $(b_{\gamma}^{\alpha})_{\gamma<\alpha}$ satisfying equality (\ref{lsystem}), where $b_{\beta}^{\alpha}=0$ for any $\beta<\alpha$ with $\beta\in S_1$.
	Now suppose that $(b_{\gamma}^{\alpha})_{\gamma<\alpha}$ is the solution as we described above. Then $g_{\alpha}$ satisfies
	\[\int_D g_{\beta}\overline{{g}_{\alpha}}e^{-\varphi}=0, \ \forall \beta<\alpha.\]
	Note that in the process of induction, for any $\beta<\alpha$, we have
	\[\sum_{\gamma<\beta}b_{\gamma}^{\beta}f^{(\gamma)}(o)+f^{(\beta)}(o)=\int_D f\overline{{g}_{\beta}}e^{-\varphi}.\]
	Let $f=g_{\alpha}$, then we get
	\begin{equation}\label{galphagbeta}
		\sum_{\gamma<\beta}b_{\gamma}^{\beta}g_{\alpha}^{(\gamma)}(o)+g_{\alpha}^{(\beta)}(o)=\int_D g_{\alpha}\overline{{g}_{\beta}}e^{-\varphi}, \ \forall \beta<\alpha.
	\end{equation}
	In equality (\ref{galphagbeta}), by induction, we can know that for any $\beta<\alpha$,
	\begin{equation}\label{galpha0}
		g_{\alpha}^{(\beta)}(o)=\int_D g_{\alpha}\overline{{g}_{\beta}}e^{-\varphi}=\overline{\int_D g_{\beta}\overline{{g}_{\alpha}}e^{-\varphi}}=0.
	\end{equation}
	In addition, in equality (\ref{equation5.1}), Letting $f=g_{\alpha}$, we have
	\[\sum_{\gamma<\alpha}b_{\gamma}^{\alpha}g_{\alpha}^{(\gamma)}(o)+g_{\alpha}^{(\alpha)}(o)=\int_D |g_{\alpha}|^2e^{-\varphi}.\]
	Then it follows from equality (\ref{galpha0}) that
	\[g_{\alpha}^{(\alpha)}(o)=\int_D|g_{\alpha}|^2e^{-\varphi}.\]
	
	Now, by induction, we can choose out $\eta[\alpha]\in\ell_0$ for any $\alpha\in\mathbb{N}^n$ satisfying what we described before equality (\ref{gbeta}), and $\{g_{\alpha}\}_{\alpha\in\mathbb{N}^n}\subset A^2(D,e^{-\varphi})$ satisfies equality (\ref{gbeta}). In the following we prove that
	\begin{equation}
		\overline{\text{span}\{g_{\alpha} : \alpha\in S_2\}}=A^2(D,e^{-\varphi}).
	\end{equation}
	
	For any $f\in A^2(D,e^{-\varphi})$, let $\{a_{\alpha}\}_{\alpha\in\mathbb{N}^n}$ be a sequence of complex numbers (which will be determined in the following). Denote
	\begin{equation}\label{falphaz}
		f_{\alpha}(z)=\sum_{\beta\leq\alpha}a_{\beta}g_{\beta}(z).
	\end{equation}
	We choose $a_{\beta}$ for $\beta\leq\alpha$, such that
	\begin{equation}\label{falpha=f}
		f_{\alpha}^{(\beta)}(o)=f^{(\beta)}(o).
	\end{equation}
	Firstly, for $\beta=(0,\ldots,0)$, if $(0,\ldots,0)\in S_2$, we can see that
	\[a_{(0,\ldots,0)}=\frac{f(o)}{g_{(0,\ldots,0)}(o)}\]
	satisfy equality (\ref{falpha=f}), and if $(0,\ldots,0)\in S_1$, we have $f(o)=0$ according to equality (\ref{f(o)}), which implies $a_{(0,\ldots,0)}=0$ satisfies inequality (\ref{falpha=f}).
	
	Secondly, assume that for some $\gamma\leq\alpha$, all $\beta<\gamma$ have been choosen to satisfy equality (\ref{falpha=f}). According to equality (\ref{falphaz}) and equality (\ref{gbeta}), we have
		\begin{equation}\label{falphagammao}
			f_{\alpha}^{(\gamma)}(o)=\sum_{\beta<\gamma}a_{\beta}g_{\beta}^{(\gamma)}(o)+a_{\gamma}g_{\gamma}^{(\gamma)}(o).
		\end{equation}
	Then
	\begin{equation}\label{leftrightarrow}
		f_{\alpha}^{(\gamma)}(o)=f^{(\gamma)}(o) \Leftrightarrow f^{(\gamma)}(o)=\sum_{\beta<\gamma}a_{\beta}g_{\beta}^{(\gamma)}(o)+a_{\gamma}g_{\gamma}^{(\gamma)}(o).
	\end{equation}
	Note that for $\gamma\in S_2$, $g_{\gamma}^{(\gamma)}(o)=\int_D|g_{\gamma}|^2e^{-\varphi}>0$, then we can choose
	\[a_{\gamma}=(g_{\gamma}^{(\gamma)}(o))^{-1}\left(f^{(\gamma)}(o)-\sum_{\beta<\gamma}a_{\beta}g_{\beta}^{(\gamma)}(o)\right)\]
	to satisfy equality (\ref{falpha=f}). If $\gamma\in S_1$, following from equality (\ref{equation5.1}), we have
	\begin{flalign*}
		\begin{split}
			f^{(\gamma)}(o)&=-\sum_{\beta<\gamma}b_{\beta}^{\gamma}f^{(\beta)}(o)=-\sum_{\beta<\gamma}b_{\beta}^{\gamma}f_{\alpha}^{(\beta)}(o)\\
			&=-\sum_{\beta<\gamma}b_{\beta}^{\gamma}\left(\sum_{\beta'\leq\beta}a_{\beta'}g_{\beta'}^{(\beta)}(o)\right)\\
			&=-\sum_{\beta'<\gamma}a_{\beta'}\left(\sum_{\beta'\leq\beta<\gamma}b_{\beta}^{\gamma}g_{\beta'}^{(\beta)}(o)\right)\\
			&=\sum_{\beta'<\gamma}a_{\beta'}g_{\beta'}^{(\gamma)}(o).
		\end{split}
	\end{flalign*}
	Then we can choose $a_{\gamma}=0$ according to equation (\ref{leftrightarrow}).
	
	Finally, by induction, we can know that the sequence $\{a_{\beta}\}$ can be choosen to satisfy equality (\ref{falpha=f}). In addition, we have $a_{\beta}=0$ for $\beta\in S_1$.
	
	Now we have $(f-f_{\alpha})^{(\beta)}(o)=0$ for any $\beta\leq\alpha$. Then it follows from equality (\ref{equation5.1}) that
	\[\int_D(f-f_{\alpha})\overline{g_{\alpha}}e^{-\varphi}=0,\]
	which means that
	\[\int_D f\overline{g_{\alpha}}e^{-\varphi}=\int_D f_{\alpha}\overline{g_{\alpha}}e^{-\varphi}=a_{\alpha}.\]
	Then combining with equality (\ref{gbeta}), we get that
	\[\sum_{\alpha}|a_{\alpha}|^2\int_D|g_{\alpha}|^2e^{-\varphi}\leq\int_D|f|^2e^{-\varphi}.\]
	Denote
	\[h(z):=\sum_{\alpha\in\mathbb{N}^n}a_{\alpha}g_{\alpha}(z),\]
	then $h\in A^2(\Omega,e^{-\varphi})$. In addition, we have
	\[h(z)=\lim_{|\alpha|\rightarrow +\infty}f_{\alpha}\Rightarrow h^{(\beta)}(o)=\lim_{|\alpha|\rightarrow +\infty}f_{\alpha}^{(\beta)}(o)=f^{(\beta)}(o)\]
	for any $\beta\in\mathbb{N}^n$, inducing that $f\equiv h=\sum_{\alpha\in\mathbb{N}^n}a_{\alpha}g_{\alpha}(z)$. Note that $a_{\alpha}=0$ for $\alpha\in S_1$.
	
	By the arbitrariness of $f\in A^2(D,e^{-\varphi})$, we get
		\begin{equation}
		\overline{\text{span}\{g_{\alpha} : \alpha\in S_2\}}=A^2(D,e^{-\varphi}),
	\end{equation}
	which implies
	\[\overline{\text{span}\{T(\eta[\alpha])\}}=A^2(D,e^{-\varphi}).\]
	Then we know that Lemma \ref{Tvarphi} holds.
\end{proof}

Note that for any $z\in D$, the functional $e_z\in A^2(D,e^{-\varphi})^*$, where
\begin{flalign*}
	\begin{split}
		e_z \ : \ A^2(D,e^{-\varphi})&\longrightarrow\mathbb{C}\\
		f&\longmapsto f(z).
	\end{split}
\end{flalign*}
Then it follows from Riesz representation theorem that there exists $\phi_z\in A^2(D,e^{-\varphi})$ such that
\[e_z\cdot f=\int_Df\overline{\phi_z}e^{-\varphi}, \ \forall f\in A^2(D,e^{-\varphi}).\]
According to the following Lemma, we can also prove Lemma \ref{xihol}.

\begin{Lemma}\label{spanphiz}
	Under the topology of $A^2(D,e^{-\varphi})$, we have
	\[\overline{\text{span}\{\phi_z : z\in D\}}=A^2(D,e^{-\varphi}).\]
\end{Lemma}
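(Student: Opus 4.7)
The plan is to use the standard Hilbert space characterization that a subset spans a dense subspace if and only if its orthogonal complement is trivial. Concretely, I will show that any $f\in A^2(D,e^{-\varphi})$ orthogonal to every $\phi_z$ ($z\in D$) must vanish identically.

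Let $f\in A^2(D,e^{-\varphi})$ satisfy $\int_D f\overline{\phi_z}e^{-\varphi}=0$ for every $z\in D$. By the defining property of $\phi_z$ as the Riesz representative of the evaluation functional $e_z$, the left-hand side equals $e_z\cdot f=f(z)$. Hence $f(z)=0$ for all $z\in D$, so $f\equiv 0$. This means $(\text{span}\{\phi_z : z\in D\})^{\perp}=\{0\}$ in the Hilbert space $A^2(D,e^{-\varphi})$, and by the orthogonal decomposition theorem we conclude $\overline{\text{span}\{\phi_z : z\in D\}}=A^2(D,e^{-\varphi})$.

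There is essentially no obstacle here; the proof is a one-line application of Riesz representation combined with the fact that point evaluations separate holomorphic functions on $D$. The only subtlety worth noting is that the inner product on $A^2(D,e^{-\varphi})$ is the weighted one $\langle f,g\rangle=\int_D f\overline{g}e^{-\varphi}$, which is consistent with the definition of $\phi_z$ given just before the statement of the lemma, so no compatibility issue arises. The well-known fact that each $e_z\in A^2(D,e^{-\varphi})^*$ (so that $\phi_z$ exists at all) follows from the usual submean inequality for plurisubharmonic weights and holomorphic functions, which is implicit in the paper's setup.
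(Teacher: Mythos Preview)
Your proof is correct and follows essentially the same approach as the paper: both argue that any element orthogonal to every $\phi_z$ must vanish identically since the inner product with $\phi_z$ is precisely evaluation at $z$. The only cosmetic difference is that the paper phrases it as a proof by contradiction (assuming the closed span is proper and producing a nonzero $h$ in its orthogonal complement), whereas you state the orthogonal-complement criterion directly.
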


\begin{proof}
	Denote that
	\[H:=\overline{\text{span}\{\phi_z : z\in D\}},\]
	then $H$ is a closed subspace of $A^2(D,e^{-\varphi})$. If $H\subsetneq A^2(D,e^{-\varphi})$, there exists $h\in A^2(D,e^{-\varphi})$ such that $h\neq 0$, and
	\[\int_D h\overline{\phi_z}e^{-\varphi}=0,\ \forall z\in D.\]
	However, we have $h(z)=e_z\cdot h=\int_D h\overline{\phi_z}e^{-\varphi}=0$ for any $z\in D$, inducing that $h\equiv 0$, which is a contradiction. It means that
	\[\overline{\text{span}\{\phi_z : z\in D\}}=A^2(D,e^{-\varphi}).\]
\end{proof}

\begin{Remark}
	It follows from Lemma \ref{spanphiz} that $\text{span}\{e_z : z\in D\}$ is dense in $A^2(D,e^{-\varphi})^*$, under the strong topology. And in Lemma \ref{xihol}, for any $\eta\in \text{span}\{e_z : z\in D\}$ such that
	\[\eta=\sum_{k=1}^Nc_k e_{z_k},\]
	where $N$ is a finite positive integer, $c_k\in\mathbb{C}$, and $z_k\in D$ for any $k$, we have that
	\[\eta\cdot f_{\tau}=\sum_{k=1}^Nc_ke_{z_k}\cdot f_{\tau}=\sum_{k=1}^Nc_kf(\tau,z_k)\]
	is holomorphic with respect to $\tau$. Then with a similar discussion in the proof of Lemma \ref{xihol}, we can know that Lemma \ref{xihol} can also be induced by Lemma \ref{spanphiz}.
\end{Remark}

\vspace{.1in} {\em Acknowledgements}. We would like to thank Zhitong Mi and Zheng Yuan for checking this paper. The second named author was supported by National Key R\&D Program of China 2021YFA1003103, NSFC-11825101, NSFC-11522101 and NSFC-11431013.

\bibliographystyle{references}
\bibliography{xbib}

\end{document}